\DeclareFontFamily{U}{rsfs}{\skewchar\font127 }
\DeclareFontShape{U}{rsfs}{m}{n}{%
   <-6> rsfs5
   <6-8> rsfs7
   <8-> rsfs10
}{}
\newcommand*{\be}[1]{\begin{equation}\label{#1}}
\newcommand*{\ee}{\end{equation}}
\DeclareMathOperator{\Span}{span}
\newtheorem{theorem}{Theorem}[section]
\newtheorem{lemma}[theorem]{Lemma}
\newtheorem{proposition}[theorem]{Proposition}
\theoremstyle{remark}
\newtheorem{remark}{Remark}[section]
\definecolor{pink}{RGB}{255,45,115}
\DeclareMathOperator{\grad}{grad}
\DeclareMathOperator{\hess}{hess}
\DeclareMathOperator{\curl}{curl}
\DeclareMathOperator{\inc}{inc}
\DeclareMathOperator{\Def}{def}
\DeclareMathOperator{\dev}{dev}
\DeclareMathOperator{\sym}{sym}
\DeclareMathOperator{\diverenge}{div}
\DeclareMathOperator{\tr}{tr}
\DeclareMathOperator{\st}{st}
\newcommand\vskw{\operatorname{vskw}}
\newcommand\mskw{\operatorname{mskw}}
\newcommand\skw{\operatorname{skw}}
\newcommand{\bM}{\mathbb M}
\newcommand{\bS}{\mathbb S}
\newcommand{\bR}{\mathbb R}
\newcommand{\bV}{\mathbb V}
\newcommand{\cH}{\mathcal H}
\newcommand{\cK}{\mathcal K}
\newcommand{\cL}{\mathcal L}
\newcommand{\cP}{\mathcal P}
\newcommand\K{\mathbb{K}}
\newcommand\M{\mathbb{M}}
\renewcommand\S{{\mathbb S}}
\newcommand\V{{\mathbb{V}}}
\newcommand\jump[1]{[\![#1]\!]}
\renewcommand{\div}{\diverenge}
\newcommand{\R}{\mathbb{R}}
 \newcommand{\bs}{{\scriptscriptstyle \bullet}}
\DeclareMathOperator{\ran}{ran}
 \newcommand\deff{\operatorname{def}}
 \numberwithin{equation}{section}
\newcommand{\0}{\mathaccent23}
\newcommand{\lag}{\mathbf{Lag}}
\newcommand{\reg}{\mathbf{Reg}}
\newcommand{\ned}{\mathbf{Ned}}
\newcommand{\nedc}{\mathbf{Ned^c}}
\newcommand{\RM}{\mathcal{RM}}
\title[Regge element, Riemann-Cartan geometry and cohomology]{Extended Regge complex for linearized Riemann-Cartan geometry and cohomology}
\author{Snorre H. Christiansen}
\address{Department of Mathematics, University of Oslo, PO Box 1053, Blindern, 0316 Oslo, Norway.}
\email{snorrec@math.uio.no}
\author{Kaibo Hu}
\address{School of Mathematics, the University of Edinburgh, James Clerk Maxwell Building, Peter Guthrie Tait Rd, Edinburgh EH9 3FD, UK.}
\email{kaibo.hu@ed.ac.uk}
\author{Ting Lin}
\address{School of Mathematics, Peking University, Beijing 100871, P.R.China}
\email{lintingsms@pku.edu.cn}
\begin{document}
\maketitle 

\begin{abstract}
We show that the cohomology of the Regge complex in three dimensions is isomorphic to $ \mathcal H^{\bs}_{dR}(\Omega)\otimes\mathcal{RM}$, the infinitesimal-rigid-body-motion-valued de~Rham cohomology. Based on an observation that the twisted de~Rham complex extends the elasticity (Riemannian deformation) complex to the linearized version of coframes, connection 1-forms, curvature and Cartan's torsion, we construct a discrete version of linearized Riemann-Cartan geometry on any triangulation and determine its cohomology. 
\end{abstract}

 \section{Introduction}

Differential complexes encode algebraic and analytic structures of many problems. Discrete versions of differential complexes play a crucial role in structure-preserving discretizations of PDEs \cite{arnold2018finite,arnold2006finite} and other applications \cite{lim2020hodge}. The elasticity (Riemannian deformation) complex is an important example, which includes the strain and stress tensors of linear elasticity, and correspondingly, metric and linearized curvature of Riemannian geometry. On the discrete level, Regge calculus  \cite{regge1961general} defines discrete metrics using the edge lengths of a triangulation. The curvature is represented by angle deficits on the codimension-2 simplices, called hinges.   Christiansen \cite{christiansen2011linearization} interpreted Regge calculus as a finite element scheme (referred to as Regge finite element below) and extended the Regge space to a discrete version of the elasticity complex (Regge complex). 
This interaction between finite elements and Regge calculus has inspired numerical schemes in solid mechanics and general relativity and studies of discretizations of curvature from a numerical analysis perspective \cite{berchenko2022finite,christiansen2013exact,gawlik2020high,gawlik2023finite-any,gawlik2023finite,gopalakrishnan2023analysis0,gopalakrishnan2023analysis,li2018regge,neunteufel2021avoiding}. 

The elasticity complex can be obtained as an example of the Bernstein-Gelfand-Gelfand (BGG) machinery \cite{arnold2021complexes,vcap2022bgg,vcap2001bernstein}. Indeed it can be derived from several copies of the de~Rham complex. An intermediate step for deriving the BGG complexes (e.g., the elasticity complex) from de~Rham complexes is the so-called twisted complexes, which include algebraic operators mixing the components. It was observed in \cite{vcap2022bgg,hu2023nonlinear} that the twisted and BGG complexes have neat correspondences in continuum mechanics:
\begin{center}
\begin{tabular}{c|c|c} 
  \hline
 & twisted complex & BGG complex\\  \hline
 1D& Timoshenko beam & Euler-Bernoulli beam\\
  2D& Reissner-Mindlin plate & Kirchhoff-Love plate\\
   3D& Cosserat elasticity & standard elasticity \\  \hline
\end{tabular}
\end{center}
Roughly speaking, the twisted complexes encode more complete models including microscopic rotational degrees of freedom. The BGG machinery is a cohomology-preserving procedure which eliminates the rotational components and gives a reduced model. The above observations concern Hodge-Laplacian problems at index 0 of complexes (Poisson-type problems, with an analogy to the de~Rham complex).  In elasticity, Kr\"oner described continuum defects as incompatibility (curvature), which is included in the elasticity complex at index 1 (Maxwell-type problems). See, for example, \cite{kroner1985incompatibility}. Roughly speaking, 0-forms correspond to compatible models, and higher-order forms correspond to defect theories of a continuum. This direction inspired us to investigate higher-order forms in the twisted complexes. In fact, formulas for defect theories of the Cosserat continuum already show connections to the twisted de~Rham complex \cite{gunther1958statik} in the context of homotopy inverses \cite{vcap2023bounded}. 

In this paper, we observe that the elasticity version of the twisted complexes in 3D encodes coframes and connection 1-forms (at index one), torsion and curvature (at index two), respectively. Therefore the entire complex complies with the observation above that the twisted complex describes more complete models: for high-order forms, {\it the (linearized) twisted complex encodes Riemann-Cartan geometry, and the BGG construction provides a cohomology-preserving machinery to eliminate the torsion to obtain Riemannian geometry}. 

At the discrete level, we extend the Regge sequence  \cite{christiansen2011linearization} to a discrete version of the twisted complex. Consequently, we obtain a possible definition of {\it discrete torsion} (see Figure \ref{fig:diagrams}). 
\begin{figure}[htbp]
{\centering
\includegraphics[width=0.9\linewidth]{./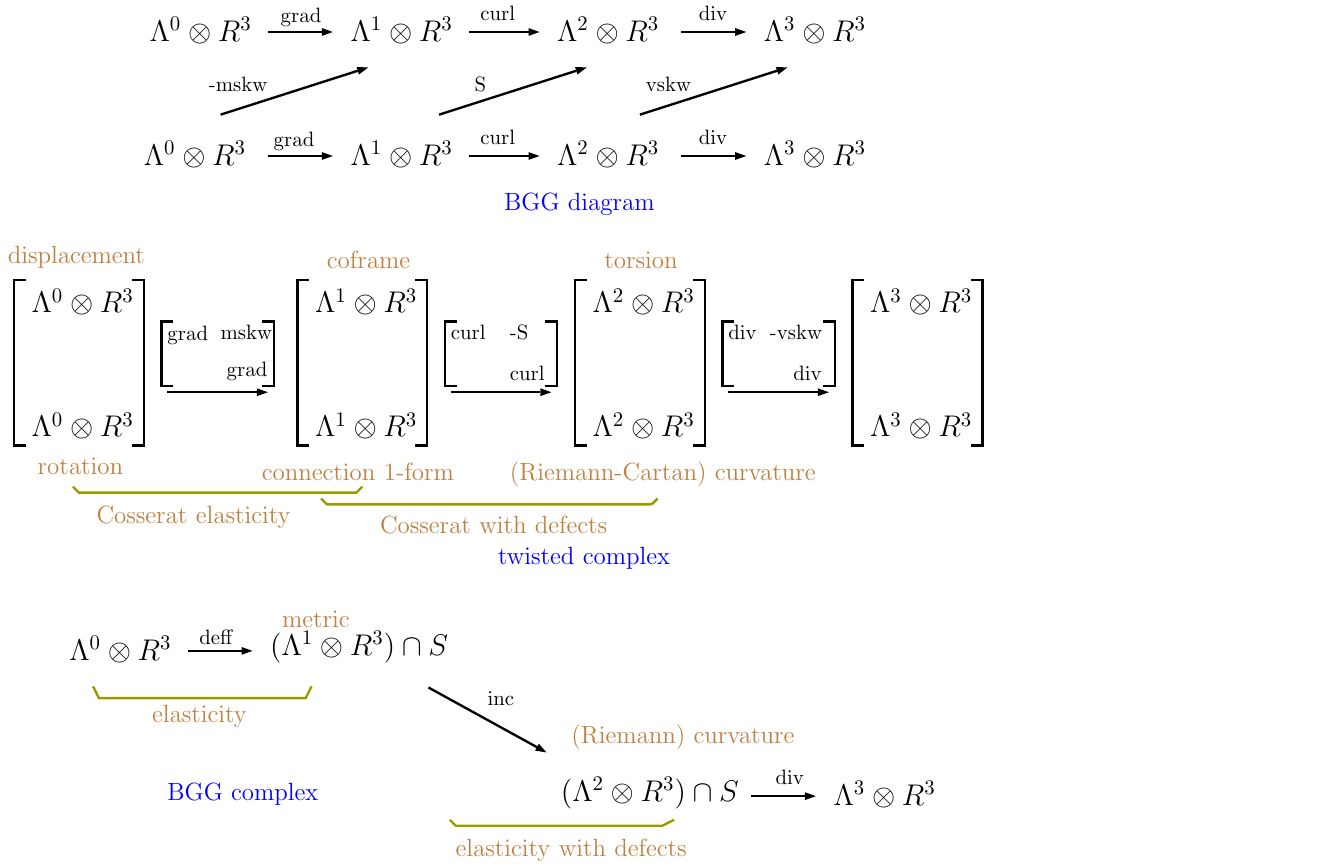} 
\caption{BGG diagram, twisted de~Rham complexes and BGG complexes. }
\label{fig:diagrams}
}
\end{figure}

We also address another open problem: the cohomology of the Regge complex. On the continuous level, it is well known that some existence and uniqueness results related to metric and curvature rely on topology. For example, the ``fundamental theorem of Riemannian geometry'' \cite{ciarlet2013linear} (reformulated as the exactness of a nonlinear elasticity complex \cite{hu2023nonlinear}) assumes simply connected domains.  Although Regge calculus/finite elements have drawn significant attention, its cohomology remains an open question, to the best of our knowledge. For Whitney forms, the degrees of freedom have a clear topological correspondence: in the lowest order case, $k$-forms are discretized on $k$-chains. For the Regge complex, the degrees of freedom also enjoy a neat discrete topological/geometric interpretation (see \cite[Section 3]{hu2023distributional}). Nevertheless, establishing its cohomology is nontrivial.  Some classical approaches, e.g., Christiansen's Finite Element Systems \cite{christiansen2010finite,christiansen2011topics} and Licht's double complex approach \cite{licht2017complexes} do not seem to provide an immediate answer. In this paper, we establish the cohomology of the Regge complex using a similar strategy developed in \cite{hu2023distributional} for the Hessian and divdiv complexes. Using the BGG machinery, we further conclude concerning the cohomology of the twisted complex (for discrete linearized Riemann-Cartan geometry). Clarifying the cohomology lays the foundation for further investigating numerical and discrete geometric problems. 

In recent years, various ``intrinsic (distributional) finite elements'' (in the sense that the elements have canonical degrees of freedom with differential form interpretations and are invariant under transforms) have been developed and implemented \cite{schoberl2014c++} for solving various problems. For example, the TDNNS method \cite{pechstein2011tangential} uses normal-normal continuous symmetric matrices to discretize the stress tensor in elasticity; the MCS method \cite{gopalakrishnan2020mass} uses tangential-normal continuous traceless matrices to discretize a stress-like variable in fluid problems. The elements developed in \cite{hu2023distributional} fit in the Hessian and divdiv complexes.  The spaces and sequences developed in this paper thus bear the potential for solving PDEs in the vein of distributional finite elements in the above examples. 

Discrete vector bundles and curvature have been investigated from finite element perspectives \cite{berchenko2022finite,berwick2021discrete,christiansen2023finite,gopalakrishnan2022analysis}. The patterns and schemes developed in this paper may also pave the way for further development of nonlinear and curved versions of discrete Riemann-Cartan geometry. Moreover, Cosserat elasticity and general relativity inspired Cartan's development of torsion \cite{hehl2007elie,scholz2019cartan}, and conversely, torsion and curvature are used to describe continuum defects (dislocation and disclination) \cite{yavari2012riemann,yavari2013riemann}. The connections to continuous and discrete complexes established in this paper may shed light on a systematic development of a cohomological approaches for modelling, analysis and computation for generalized continuum, as differential complexes and the BGG machinery may find their mirrors in mechanics categorytheoretically. However, further developments in these directions, as well as comparisons with existing literature on incorporating torsion in Regge calculus (e.g., \cite{holm2005regge,pereira2002regge,schmidt2001torsion}), are beyond the scope of this paper. 




The rest of the paper is organized as follows. In \Cref{sec:notation}, we introduce preliminaries and notation. In \Cref{sec:continuous}, we observe that the twisted complex encodes the linearized Riemann-Cartan geometry at the continuous level. In \Cref{sec:discrete}, we extend the Regge sequence to obtain a discrete version of the twisted complex. Finally, in \Cref{sec:cohomology}, the cohomology of the Regge sequence (and thus of the twisted complex) is determined. 

\section{Preliminaries and notation}\label{sec:notation}

In this section, we set up the notation that will be used throughout the paper. 
\subsection{Topology and differential operators}
Let $\Omega\subset \mathbb{R}^{3}$ be a polyhedral domain and $\Delta$ be a triangulation (simplicial complex) of $\Omega$, and 
we use $\mathsf V, \mathsf E, \mathsf F, \mathsf K$ to denote the set of vertices, edges, faces, and 3D cells, respectively. 
We use $\mathsf V_{0}$ and $\mathsf V_{\partial}$ to denote the set of internal vertices and boundary vertices, respectively (similarly for $\mathsf E$ and $\mathsf F$). {Given a simplex $\sigma\in \Delta$, let $\st(\sigma) := \cup\{K: \sigma \subset K\}$ be the local patch with respect to $\sigma$. For example, $\st(v)$ and $\st(e)$ refer to the vertex patch and the edge patch, respectively.} We use $\mathcal H^{\bs}_{dR}(\Omega)$ to denote the de Rham cohomology.

Let $K$ be a tetrahedron and $f\subset K$ be a face. We use $n_f$ to denote the unit normal vector of $f$, and $t_i^f$ are the two unit tangential vectors of $f$, which are perpendicular to each other. Let $e$ be an edge. Then $t_{e}$ is the tangent vector, and $n_{1}^{e}$ and $n_{2}^{e}$ are the two normal vectors.

{
    We introduce the relative homology with coefficient in $V$, $\mathcal H_{\bs}(\Delta,V ; \partial \Delta).$ Here, the chain group $C_k(\Delta, V; \partial \Delta)$ is generated by $k$-simplices in $\Delta\backslash \partial \Delta$. Given an interior $k$-simplex $\sigma = [ x_{0}, x_1,\cdots,  x_k]$ and an interior $(k-1)-$simplex $\tau$, define 
    \begin{equation}
        \mathcal O(\tau, \sigma) := \left\{\begin{aligned} (-1)^{j+1} & \text{ if } \tau = [ x_0,  x_1, \cdots, \widehat{ x_j}, \cdots,  x_k] \text{ for some index }j, \\ 0 & \text{ otherwise. } 
        \end{aligned}\right.
        \end{equation}
The boundary operator is then defined as
    \begin{equation}
        \partial_k \sigma := \sum_{\tau \in \Delta_{k-1}} \mathcal O(\tau, \sigma)\tau,
        \end{equation}
    and $$\mathcal H_k(\Delta, V; \partial \Delta) := \ker(\partial_{k})/ \ran(\partial_{k-1}).$$
{Clearly, $\partial_k$ induces a linear operator from $C_k(\Delta, V; \partial \Delta)$ to $C_{k-1}(\Delta, V; \partial \Delta)$.}
We have the following results on the relative homology.
\begin{theorem}
\begin{enumerate}
    \item (Universal Coefficient Theorem). $\mathcal H_k(\Delta, V; \partial \Delta) \cong \mathcal H_k(\Delta; \partial \Delta) \otimes V.$
    \item (de Rham Theorem). In three dimensions, $\mathcal H_k(\Delta; \partial \Delta) \cong \mathcal H_{dR}^{3-k}(\Omega)$.
\end{enumerate}
\end{theorem}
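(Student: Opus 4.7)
The plan is to handle the two statements independently, since both are classical applications of standard algebraic topology to the present setting.

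For part (1), I would argue that because $V$ is a real (or rational) vector space, it is flat as a module over the coefficient ring, so all $\mathrm{Tor}$ terms in the universal coefficient spectral sequence vanish. Concretely, the chain group $C_k(\Delta, V; \partial \Delta)$ is by definition the tensor product $C_k(\Delta; \partial \Delta) \otimes V$, and the boundary operator defined via $\mathcal O(\tau,\sigma)$ acts as $\partial_k \otimes \mathrm{id}_V$. Since tensoring with a vector space is an exact functor, it commutes with taking kernels and images, hence with taking homology. I would state this as a one-line computation:
\begin{equation*}
\mathcal H_k\bigl(C_\bullet(\Delta;\partial\Delta)\otimes V\bigr) \;\cong\; \mathcal H_k(\Delta;\partial\Delta)\otimes V.
\end{equation*}
No real obstruction here beyond checking conventions.

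For part (2), I would combine Lefschetz duality with the de~Rham theorem. Because $\Omega$ is a polyhedral domain in $\mathbb R^3$, it is a compact orientable topological $3$-manifold with boundary (possibly after standard regularity considerations), and its triangulation $\Delta$ realizes this structure. Lefschetz duality then gives
\begin{equation*}
\mathcal H_k(\Delta;\partial\Delta) \;\cong\; \mathcal H^{3-k}(\Delta),
\end{equation*}
where the right-hand side is ordinary simplicial cohomology (with real coefficients). The simplicial-to-singular comparison identifies $\mathcal H^{3-k}(\Delta)$ with $\mathcal H^{3-k}(\Omega)$ (singular), and the classical de~Rham theorem identifies singular cohomology with $\mathcal H^{3-k}_{dR}(\Omega)$.

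The main obstacle I anticipate is being careful about the hypotheses on $\Omega$ needed for Lefschetz duality (orientability and the manifold-with-boundary structure of a general polyhedron), and matching the algebraic sign conventions in the definition of $\mathcal O(\tau,\sigma)$ with the duality pairing. Once these are in place, the two isomorphisms compose to give the stated identification $\mathcal H_k(\Delta,V;\partial\Delta)\cong \mathcal H^{3-k}_{dR}(\Omega)\otimes V$, which is exactly the form that will feed into the cohomology computation for the Regge complex later in \Cref{sec:cohomology}.
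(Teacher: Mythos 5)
Your proposal is correct: part (1) is exactly the flatness/exactness argument for vector-space coefficients, and part (2) is the standard composition of Lefschetz duality for the compact orientable $3$-manifold-with-boundary $\overline\Omega$ with the simplicial--singular comparison and the classical de~Rham theorem. The paper states this theorem as classical background and gives no proof of its own, so there is nothing to compare against; your sketch, including the caveat you flag about the manifold-with-boundary structure of a general polyhedral domain, is the expected justification.
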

}

Throughout the paper, we use the convention that differential operators $\grad$, $\curl$ and $\div$ are row-wise when they are applied to a matrix field. We also use the nabla notation: for example,  column-wise $(\nabla v)_{ij}=\partial_i v_j$ and row-wise $(v\nabla)_{ij}=\partial_jv_i$. Therefore $\grad v=v\nabla$.  The definition for column-wise $\nabla\times w$, $\nabla\cdot w$ and row-wise $w\times \nabla$, $w\cdot \nabla$ is similar. 

Following \cite{arnold2021complexes,vcap2022bgg}, we use vector/matrix proxies: \;t{Specify the 3D.}
\begin{table}[h!]
\begin{center}
\begin{tabular}{c|c}
$\mathbb V$ & $\mathbb R^3$  \\
$\mathbb M$ &the space of all $3\times 3$-matrices\\
$\mathbb S$ & symmetric matrices\\
$\mathbb K$ & skew symmetric matrices\\
$\mathbb T$ & trace-free matrices\\
$\skw: \M\to \K$ & skew symmetric part of a matrix\\
$\sym: \M\to \S$ & symmetric part of a matrix\\
$\tr:\M\to\R$ & matrix trace\\
$\iota: \R\to \M$  & the map $\iota u:= uI$ identifying a scalar with a scalar matrix\\
$\dev:\mathbb{M}\to \mathbb{T}$ & deviator (trace-free part of a matrix) given by $\dev u:=u-1/3 \tr (u)I$\\
\end{tabular}
\end{center}
\end{table}

In three space dimensions, we identify a skew-symmetric matrix with a vector,
$$
 \mskw\left ( 
\begin{array}{c}
v_{1}\\ v_{2}\\ v_{3}
\end{array}
\right ):= \left ( 
\begin{array}{ccc}
0 & -v_{3} & v_{2} \\
v_{3} & 0 & -v_{1}\\
-v_{2} & v_{1} & 0
\end{array}
\right ).
$$
Consequently, we have   $\mskw(v)w = v\times w$ for $v,w\in\V$.  We also define
$\vskw=\mskw^{-1}\circ \skw: \M\to \V$. We further define the Hessian operator $\hess:=\grad\circ\grad$, {the linearized deformation operator $\deff = \sym \circ \grad$,} and for any matrix-valued function $u$, $\mathcal{S}u:=u^{T}-\tr(u)I$.  Define the incompatibility operator (linearized Einstein tensor) $\inc :=\curl \circ \mathrm{T}\circ\curl $, i.e., $\inc \sigma:=\nabla\times \sigma\times\nabla$. 

We introduce Dirac measures on (sub)simplices. Given a simplex $\sigma$, we define the scalar, vector and matrix-valued delta $\delta_{\sigma}$,  $\delta_{\sigma}[a]$ and $\delta_{\sigma}[ A]$ by
$$
\langle \delta_{\sigma}, u \rangle = \int_{\sigma} u,\quad \langle \delta_{\sigma}[a],  v \rangle  = \int_{\sigma}  a \cdot  v, \quad \langle \delta_{\sigma}[ A],  W \rangle = \int_{\sigma}  A :  W,
$$
for smooth scalar, vector and matrix-valued functions $u$, $ v$, and $W$, respectively. Here $ a\in \mathbb V$ and $A\in \mathbb M$.

\subsection{Finite element spaces and the Regge complex}
 \begin{figure}[htbp]
\includegraphics[width=0.8\linewidth]{./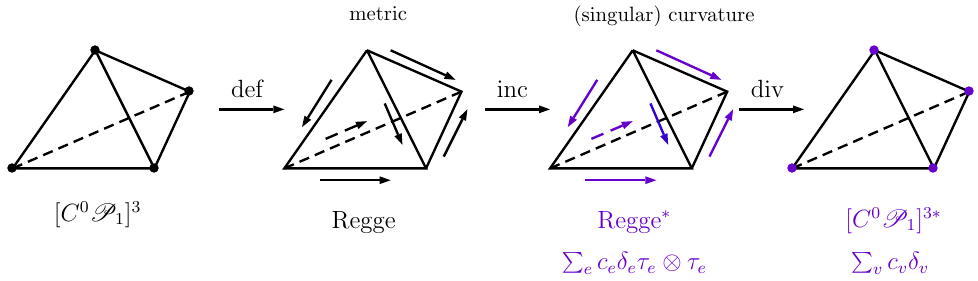} 
\caption{Regge complex \eqref{cplx:regge} (\cite{christiansen2011linearization}). }
\label{fig:regge}
\end{figure}
We recall the Regge complex (see \Cref{fig:regge}):
\begin{equation}
\label{cplx:regge}
\begin{tikzcd}
0 \ar[r] & \lag \ar[r, "\Def"] & \reg \ar[r,"\inc"] & (\reg_0)' \ar[r,"\div"] & (\lag_0)' \ar[r] & 0.
\end{tikzcd}
\end{equation}

Here, the space $\lag$ stands for the lowest-order Lagrange element space, defined as 
\begin{equation}
\lag := \{ u \in C^0(\Omega; \bV) : u|_K \in \cP_1(K)\otimes  \bV \text{ for each cell }K\}.
\end{equation}
The degrees of freedom of $\lag$ are $u \mapsto u(x)$ at each vertex $x \in \mathsf V$. 
The space $\reg$ represents the Regge element space, consisting of piecewise constant symmetric-matrix-valued functions with  tangential-tangential interelement continuity, defined as 
\begin{equation}
\begin{split}
\reg := \{ \sigma \in L^2(\Omega; \bS) :& \sigma|_K \in \mathbb S \text{ for each cell } K, \\ & n_{f} \times \sigma \times n_{f} \text{ is continuous on each internal face } f \}.
\end{split}
\end{equation}
The degrees of freedom of $\reg$ are $\sigma \mapsto \int_e (t_e \cdot \sigma \cdot t_e)|_e$ for each $e \in \mathsf E$; see \cite{christiansen2011linearization}. 
The space $\reg$ has weak regularity (with continuous tangential-tangential components across the boundaries of cells) and the remaining two spaces consist of distributions. Define
$$(\lag_0)' := \Span\{ \delta_{x}[v]: v \in \bV, x \in \mathsf V_0\},$$
where $\delta_{x}[v]: u \mapsto (u \cdot v)(x)$ is the vertex Dirac delta for $v \in \mathbb V$ and vertex $x$; and
$$(\reg_0)' := \Span\{ \delta_{e}[t_e t_e^T]: e \in \mathsf E_0\},$$
where $\delta_{e}[t_e t_e^T]: \sigma \mapsto \int_{e} t_e \cdot \sigma \cdot t_e$ for an edge $e$ is the tangential-tangential edge delta.

The operators in  \eqref{cplx:regge} are defined in the sense of distributions, i.e.,  $\Def$, $\inc$ and $\div$ are given by
$$
\langle \Def u, \sigma \rangle = -\langle u, \div \sigma \rangle, \forall \sigma \in C^{\infty}_c(\Omega; \mathbb{S}),
$$
$$\langle \inc \sigma, \tau \rangle = \langle \sigma, \inc \tau \rangle, \forall \tau \in C_c^{\infty}(\Omega; \mathbb S),$$
$$\langle \div \sigma, u \rangle = - \langle \sigma, \deff u \rangle, \forall u \in C_c^{\infty}(\Omega; \mathbb V).$$
 In \cite{christiansen2011linearization}, it was shown that the sequence \eqref{cplx:regge} is a complex. 

For later use, we define some vector and finite element spaces. Let $\RM := \ker(\Def) = \{x \mapsto a +  b \times  x:  a, b \in \mathbb V\}$ be the space of infinitesimal rigid motions in three dimensions. The first-type N\'ed\'elec space of the lowest order \cite{nedelec1980mixed} is 
\begin{equation}
\label{eq:ned}
    \ned := \{u : u|_{K} \in \RM \text{ for each cell } K : u \times n_f \text{ is continuous on each face }f\}.
    \end{equation}
The degrees of freedom are $u\mapsto \int_e u\cdot t_e$. 
    The second-type N\'ed\'elec space of the lowest order \cite{nedelec1980mixed} is 
\begin{equation}
\label{eq:nedc}
\nedc : = \{u: u|_K \in \cP_1 \otimes \mathbb V \text{ for each cell } K,  u \times n_f \text{ is continuous on each face }f\}.
\end{equation}
The degrees of freedom can be given by 
$u \mapsto \int_{e} (u \cdot t_e)p,$ for all $p \in \mathcal P_1(e).$


\section{Linearized Riemann-Cartan geometry and twisted complexes}\label{sec:continuous}

In this section, we observe that the elasticity version of the twisted de~Rham complex corresponds to the linearized Riemann-Cartan geometry \cite{sharpe2000differential}. We mainly follow the notation of \cite{hehl2007elie}.

Recall that the BGG diagram for deriving the elasticity complex is 
\begin{equation}\label{bgg-continuous}
\begin{tikzcd}
0 \arrow{r} &\Lambda^{0, 1} \arrow{r}{\grad} &\Lambda^{1, 1} \arrow{r}{\curl} &\Lambda^{2, 1} \arrow{r}{\div} &\Lambda^{3, 1} \arrow{r}{} & 0\\
0 \arrow{r} &\Lambda^{0, 2} \arrow{r}{\grad} \arrow[ur, "-\mskw"]&\Lambda^{1, 2} \arrow{r}{\curl} \arrow[ur, "\mathcal{S}"]&\Lambda^{2, 2}\arrow{r}{\div}\arrow[ur, "2\vskw"] &\Lambda^{3, 2}  \arrow{r}{} & 0.
 \end{tikzcd}
\end{equation}
Here $\Lambda^{i, j}$ is the space of $j$-form-valued $i$-forms. When vector/matrix proxies are used, $\Lambda^{0, j} $ and $\Lambda^{3, j}$ corresponds to vector-valued functions, while $\Lambda^{1, j} $ and $\Lambda^{2, j}$ are matrix-valued functions, for $j = 1,2$. Then the exterior derivatives and the connecting algebraic maps have the form shown in the diagram \eqref{bgg-continuous}.

The twisted complex derived from the diagram \eqref{bgg-continuous} is 
\begin{equation} \label{twisted-continuous}
\begin{tikzcd}
0 \arrow{r} &
\left (
\begin{array}{c}
\Lambda^{0, 1}\\
\Lambda^{0, 2}
\end{array}
\right )
 \arrow{r}{\mathcal{A}^0} &\left (
\begin{array}{c}
\Lambda^{1, 1}\\
\Lambda^{1, 2}
\end{array}
\right ) \arrow{r}{\mathcal{A}^1} &\left (
\begin{array}{c}
\Lambda^{2, 1}\\
\Lambda^{2, 2}
\end{array}
\right )\arrow{r}{\mathcal{A}^2} &\left (
\begin{array}{c}
\Lambda^{3, 1}\\
\Lambda^{3, 2}
\end{array}
\right )  \arrow{r}{} & 0,
 \end{tikzcd}
\end{equation}
where $\mathcal{A}^\bs=\left (
\begin{array}{cc}
d^\bs & -S^\bs\\
0 & d^\bs 
\end{array}
\right )$ are the twisted operators, i.e., 
$$
\mathcal{A}^0=\left (
\begin{array}{cc}
\grad & \mskw\\
0 & \grad 
\end{array}
\right ), \quad 
\mathcal{A}^1=\left (
\begin{array}{cc}
\curl & -\mathcal{S}\\
0 & \curl 
\end{array}
\right ), \quad \mathcal{A}^2=\left (
\begin{array}{cc}
\div & -2\vskw\\
0 & \div 
\end{array}
\right ).  
$$
The fact that \eqref{twisted-continuous} is a complex is a consequence of the (anti)commutativity of the diagram \eqref{bgg-continuous} (formally, $dS=-Sd$, with $d^\bs$ and $S^\bs$ of corresponding degrees).

In the rest of this section, we will fit various quantities and operators of linearized Riemann-Cartan geometry in the twisted complex \eqref{twisted-continuous}.

Let $e_{\alpha}, \alpha=1, \cdots, n$ be a frame, and $\theta^{\beta}, \beta=1, \cdots, n$ be its dual basis, i.e., a {\it coframe}. The connection is introduced as 1-forms: $\Gamma_{\alpha}^{\beta}:=\Gamma_{i\alpha}^{\beta}dx^{i}$. Here, the Latin alphabet indicates coordinate indices, while Greek letters indicate (anholonomic) frame indices. Therefore $\Gamma_{i\alpha}^{\beta}dx^{i}$ corresponds to a transform from a frame basis to a frame basis. Given a coframe and a connection (both 1-forms), one can define torsion 2-form \cite[(3)]{hehl2007elie} and curvature 2-form \cite[(4)]{hehl2007elie}: 
\begin{equation}\label{def:T}
T^{\alpha}:=D\theta^{\alpha}:=d\theta^{\alpha}+\Gamma_{\beta}^{\alpha}\wedge \theta^{\beta}, 
\end{equation}
\begin{equation}\label{def:R}
R_{\alpha}^{\beta}:=d\Gamma_{\alpha}^{\beta}+\Gamma_{\gamma}^{\beta}\wedge \Gamma_{\alpha}^{\gamma}.
\end{equation}
Here $d$ is the exterior derivative and $D$ is the covariant exterior derivative. 
 At this point, we should understand $\Gamma_{\alpha}^{\beta}$ as a tensor with three indices, as  $\Gamma_{\alpha}^{\beta}=\Gamma_{i\alpha}^{\beta}dx^{i}$. {Assuming that we have a Riemann-Cartan connection}, i.e., we have a metric $g$ and the connection is compatible with $g$ ($Dg=0$). By the definition of $D$, we have, for the symmetric part,  $\Gamma_{(\alpha\beta)}=\frac{1}{2}dg_{\alpha\beta}$, where $\Gamma_{\alpha\beta}=g_{\alpha\gamma}\Gamma_{\beta}^{\gamma}$ \cite[(8)]{hehl2007elie}. Moreover, we {assume that the (co)frame is orthonormal}, i.e., $g_{\alpha\beta}$ corresponds to the identity matrix (or with some $-1$ in the Minkowski case). Therefore, $\Gamma_{(\alpha\beta)}=\frac{1}{2}dg_{\alpha\beta}=0$. This implies that $\Gamma_{\alpha\beta}$ is skew-symmetric with its two indices. Now, {to see correspondence to \eqref{bgg-continuous} where the quantities are matrix-valued functions,} we can regard $\Gamma_{i\alpha\beta}:=g_{\alpha\gamma}\Gamma^{\gamma}_{i\beta}$ as a tensor with two indices (matrices) as we can compress $\alpha, \beta$ to one index.

Now we consider the linearization of \eqref{def:T} and \eqref{def:R} around trivial connections ($\Gamma_{\beta}^{\alpha}=0$) and $\theta^{\alpha}=dx^{\alpha}$. Under linearization, we can freely lower or raise indices. This means that we can now regard $\Gamma_{\alpha}^{\beta}$ as a matrix.  The first term in \eqref{def:R} is just $d$ of a skew-symmetric matrix-valued 1-form, which corresponds to a curl. The second term $\Gamma_{\gamma}^{\beta}\wedge \Gamma_{\alpha}^{\gamma}$ is nonlinear and vanishes in the linearization. Therefore, we write $\eqref{def:R}$ as $R_{\alpha\beta}=\curl \Gamma_{\alpha\beta}$. Similarly, the first term in \eqref{def:T}, $d\theta^{\alpha}$ corresponds to a curl of a matrix. The second term $\Gamma_{\beta}^{\alpha}\wedge \theta^{\beta}$ is nonlinear and two terms come out from the linearization: 
\begin{equation}\label{linearization}
\0\Gamma_{\beta}^{\alpha}\wedge \theta^{\beta}+\Gamma_{\beta}^{\alpha}\wedge \0\theta^{\beta}, 
\end{equation}
where $\0\Gamma_{\beta}^{\alpha}=0$ and $\0\theta^{\beta}=dx^{\beta}$ indicate the background quantities. The first term  vanishes as $\0\Gamma_{\beta}^{\alpha}=0$, and therefore only $\Gamma_{\beta}^{\alpha}\wedge \0\theta^{\beta}$ remains.  {Below we show that the linearization of the term $\Gamma_{\beta}^{\alpha}\wedge \0\theta^{\beta}$ exactly corresponds to $\mathcal{S}\Gamma_{\beta}^{\alpha}$ with connections to the diagram \eqref{bgg-continuous}. }
In fact, given $\Gamma_{i\beta}^{\alpha}dx^{i}$ which corresponds to a skew-symmetric matrix (suppressing $\alpha$ and $\beta$), we want to know what is the operation that takes it to $\Gamma_{\beta}^{\alpha}\wedge \0\theta^{\beta}$. 
Since
$$
\Gamma_{\beta}^{\alpha}\wedge dx^{\beta}=\Gamma_{i\beta}^{\alpha}dx^{i}\wedge dx^{\beta},
$$
 the operation corresponds to skew-symmetrization of $\Gamma_{i\beta}^{\alpha}$ with respect to $i$ and $\beta$. 
{With vector/matrix proxies, the above map is summarized as follows, with $\Gamma_{i\alpha\beta}$ and the skew-symmetrization $\epsilon^{i\beta\gamma}\Gamma_{i\alpha\beta}$ viewed as matrices:
$$
T_{i\mu}:=\frac{1}{2}\epsilon_{\mu}^{~~\alpha\beta}\Gamma_{i\alpha\beta}\mapsto \Gamma_{i\alpha\beta}=\epsilon^{\mu\alpha}_{\quad\beta}T_{i\mu}\to  \epsilon^{i\beta\gamma}\Gamma_{i\alpha\beta}=T^{\alpha\gamma}-(\tr T)\delta^{\alpha\gamma}=(\mathcal{S}T)^{\alpha\gamma}.
$$
Here we start with a matrix $T_{i\mu}$, which is the axial vector version of $\Gamma_{i\alpha\beta}$ ($\vskw$ with respect to $\alpha, \beta$). The first map recovers $\Gamma_{i\alpha\beta}$ from $T_{i\mu}$. The second map is to take the skew-symmetrization with respect to $i, \beta$ and identify the result with an axial vector ($\vskw$). The resulting map from $T_{i\mu}$ to $ \epsilon^{i\beta\gamma}\Gamma_{i\alpha\beta}$ exactly corresponds to the $\mathcal{S}$ operator in \eqref{bgg-continuous}. 
}


To summarize, assuming that we are given  a background consisting of an orthonormal coframe $\0\theta^{\alpha}$ and  a Riemann-Cartan connection $\0\Gamma^{\alpha}_{\beta}$, 
which are both 1-forms, then the linearized torsion 2-form and the curvature 2-form operators are given by (in terms of proxies)
\begin{align}
T&=\curl {\theta}+\mathcal{S}\Gamma, \\
R&=\curl \Gamma,
\end{align}
{or in a more compact form, 
\begin{equation}\label{theta-Gamma-eqn}
\left ( 
\begin{array}{c}
T\\
R
\end{array}
\right )=\left ( 
\begin{array}{cc}
d & \mathcal{S}\\
0 & d
\end{array}
\right )\left ( 
\begin{array}{c}
\theta\\
\Gamma
\end{array}
\right ).
\end{equation}
The right hand side is the operator $\mathcal{A}^{1}$ in \eqref{twisted-continuous} with an adjusted sign.


With \eqref{theta-Gamma-eqn}, we observe the correspondence between linearized Riemann-Cartan geometry and the twisted complex \eqref{twisted-continuous}: $\Lambda^{1, 1}$ corresponds to coframe;  $\Lambda^{1, 2}$ corresponds to connection 1-forms; $\Lambda^{2, 1}$ corresponds to torsion; $\Lambda^{2, 2}$ corresponds to curvature. 
}


\section{Extended Regge complexes and discrete Riemann-Cartan geometry}\label{sec:discrete}

{In Section \ref{sec:continuous}, we observed that the twisted complex \eqref{twisted-continuous} corresponds to quantities and operators in the linearized Riemann-Cartan geometry. The twisted complex encodes additional information compared to the BGG complex. In the current context, torsion appears in the twisted complex, while in the BGG construction, the torsion part is eliminated. Thus the resulting BGG complex (the elasticity complex) only describes Riemannian geometry.  

Regge finite elements and the corresponding sequence are a neat discrete version of the elasticity complex (Riemannian geometry). A natural question arises: is there a discrete version of the twisted complex \eqref{twisted-continuous} which extends the Regge sequence in the sense that the Regge sequence can be derived from it on the continuous level? If such an extension exists, the result would be a natural candidate for a discrete version of Riemann-Cartan geometry, in particular, the torsion tensors. 

In this section, we construct such an extension.
}
 The spaces are summarized in the following diagram:
\begin{equation}\label{bgg-regge}
\begin{tikzcd}
0 \arrow{r} &V_{h}^{0} \arrow{r}{\grad} &V_{h}^{1} \arrow{r}{\curl} &V_{h}^{2} \arrow{r}{\div} &V_{h}^{3}  \arrow{r}{} & 0\\
0 \arrow{r} &W_{h}^{0} \arrow{r}{\grad} \arrow[ur, "-\mskw"]&W_{h}^{1}  \arrow{r}{\curl} \arrow[ur, "\mathcal{S}"]&W_{h}^{2}  \arrow{r}{\div}\arrow[ur, "2\vskw"] &W_{h}^{3}  \arrow{r}{} & 0.
 \end{tikzcd}
\end{equation}
The twisted complex derived from the diagram \eqref{bgg-regge} is thus 
\begin{equation}\label{twisted-regge}
\begin{tikzcd}
0 \arrow{r} &
\left (
\begin{array}{c}
V_{h}^{0}\\
W_{h}^{0} 
\end{array}
\right )
 \arrow{r}{\mathcal{A}^0} &\left (
\begin{array}{c}
V_{h}^{1}\\
W_{h}^{1} 
\end{array}
\right ) \arrow{r}{\mathcal{A}^1} &\left (
\begin{array}{c}
V_{h}^{2}\\
W_{h}^{2} 
\end{array}
\right )\arrow{r}{\mathcal{A}^2} &\left (
\begin{array}{c}
V_{h}^{3}\\
W_{h}^{3} 
\end{array}
\right )  \arrow{r}{} & 0.
 \end{tikzcd}
\end{equation}

The spaces in the $(V^{\bs}, d^{\bs})$ complex are as follows:
\begin{itemize}[leftmargin=*]
\item[-] $V_{h}^{0}$: the vector-valued linear Lagrange space $\lag$. The degrees of freedom are given by the vertex evaluation of each component.
\item[-] $V_{h}^{1}$: the Regge element enriched with piecewise constant skew-symmetric matrices, which can be written as 
{\begin{equation}
    \reg \oplus \{ \mskw(c): c \in L^2(\Delta; \mathbb V) \text{ is piecewise constant}\}.
\end{equation}
 } Another way to obtain this space is to use piecewise constants (including both symmetric and skew-symmetric modes) as the shape functions and use the edge evaluation $u\mapsto \int_{e}t_{e}\cdot u\cdot t_{e}$ and three interior modes (against constant skew-symmetric matrices) as the degrees of freedom. Note that the symmetric part still has tangential-tangential continuity across faces, while the skew-symmetric part does not. The dimension of $V_h^1$ is $\#_{\mathsf E} + 3\#_{\mathsf K}$.
\item[-] $V_{h}^{2}:=\mathrm{span}_{f\in\mathsf{F}_{0}}\{\delta_{f}[nt_{1}^T], \delta_{f}[nt_{2}^T], \delta_{f}[t_{1}t_{1}^T+t_{2}t_{2}^T]\}$. Here 
$$
\langle \delta_{f}[nt_{i}^T], \phi\rangle :=\int_{f}n_{f}\cdot \phi\cdot t_{i}^{f}\, dx,
$$
$$
\langle \delta_{f}[t_{1}t_{1}^T+t_{2}t_{2}^T], \phi\rangle :=\int_{f}t_{1}^{f}\cdot \phi\cdot t_{1}^{f}+t_{2}^{f}\cdot \phi\cdot t_{2}^{f}\, dx.
$$
\item[-] $V_{h}^{3}$ consists of edge normal deltas, i.e., $V_{h}^{3}:=\mathrm{span}_{e\in \mathsf{E}_{0}}\{\delta_e[{n_{i}}]: ~i=1, 2\}$.
\end{itemize}
The lower complex $(W_{h}^{\bs}, d^{\bs})$ consists of three copies of the lowest order distributional de~Rham elements \cite{braess2008equilibrated,licht2017complexes} (see \Cref{fig:distributional-deRham}). Specifically,
 $W_h^0$ is the piecewise constant space, and 
$$
W_h^1:=\mathrm{span}_{f\in \mathsf{F}_{0}}\{\delta_{f}[c \otimes n_{f}]: c \in \mathbb V \},\quad W_h^2:=\mathrm{span}_{e\in \mathsf{E}_{0}}\{  \delta_{e}[c \otimes t_{e}]: c \in \mathbb V\},
$$
$$W_h^3:=\mathrm{span}_{x\in \mathsf{V}_{0}}\{\delta_{x}[c]: c \in \mathbb V\}.$$
By the definition of distributional derivatives, the sequence $(W_{h}^{\bs}, d^{\bs})$ is a complex. This can also be verified using the fact that $(W_{h}^{\bs}, d^{\bs})$ is the dual of the finite de~Rham complex consisting of the lowest order Whitney forms \cite{arnold2006finite}.
 \begin{figure}[htbp]
\includegraphics[width=0.8\linewidth]{./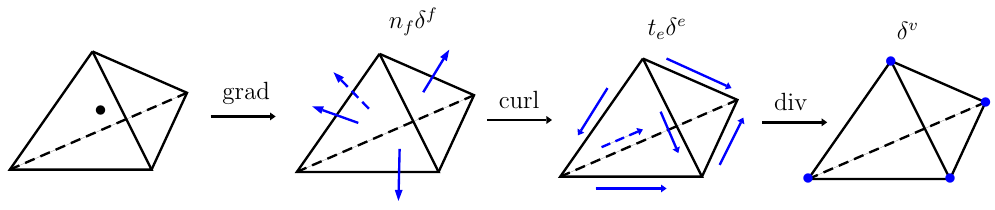} 
\caption{Distributional de~Rham complex \cite{braess2008equilibrated,licht2017complexes}, lower row $(W_h^\bs, d^\bs)$ of \eqref{bgg-regge}.}
\label{fig:distributional-deRham}
\end{figure}

Below we check that the differential and algebraic operators are well defined, i.e., they map to the correct spaces. 

\begin{enumerate}[leftmargin=*]
\item {\bf \noindent $\grad V_{h}^{0}\subset V_{h}^{1}$.} 

The edge tangential-tangential continuity comes from the fact that $V_h^{0}$ is continuous.

\item {\bf \noindent $\curl V_{h}^{1}\subset V_{h}^{2}$.}

If $u$ is a piecewise constant, then by definition, for $\varphi \in C^{\infty}_c(\Omega; \bM)$, 
\begin{align*}
\langle \curl u, \varphi \rangle = \langle u, \curl\varphi\rangle& =\sum_{K}[(\curl u, \varphi)+(u, \varphi\times n_f)_{\partial K}]
\\ & = -\sum_{K}(u\times n_f, \varphi)_{\partial K}= - \langle\sum_{f\in \mathsf F_0}\delta_{f}\Big[\jump{u \times n_f}_f\Big] , \varphi\rangle.
\end{align*}
{Here $\jump{c}_f = \sum_{K \supset f} \mathcal O(f, K) c|_{K}$ is the jump across the face $f$, which is frequently used in the DG context.} 
For $\sigma \in \reg$, the tangential-tangential components are continuous, then the above calculation shows that the curl of $\reg$ is spanned by $\delta_f[n_f \otimes t_i^f]$.

Next we compute the remaining term, namely, $\curl \mskw c$, where $c$ is a piecewise constant vector. 
 Then using the above formula,  we have
$$
\langle \mskw (c), \curl \phi \rangle=\sum_{\partial{K}}-\langle \mskw(c)\times n_f, \varphi\rangle.
$$
Moreover,  we have $\mskw(c)\times n_f=-(c\cdot n_f)I+n_f\otimes c$.
Therefore, 
\begin{align*}
-[c\cdot n_f]I+n_{f}\otimes [c]&=-[c\cdot n_f](n_f\otimes n_f+t_{1}^f\otimes t_{1}^f+t_{2}^f\otimes t_{2}^f)+[c\cdot n_f]n_f\otimes n_f\\&\quad\quad +[c\cdot t_{1}^f]n_f\otimes t_{1}^f+[c\cdot t_{2}^f]n_f\otimes t_{2}^f
\\ &=-[c\cdot n_f](t_{1}^f\otimes t_{1}^f+t_{2}^f\otimes t_{2}^f)+[c\cdot t_{1}^f]n_{f}\otimes t_{1}^f+[c\cdot t_{2}^f]n_{f}\otimes t_{2}^f.
\end{align*}
This implies that {$\curl \mskw c$} is in the  span of $\delta_F[{n \otimes t_{i}^f}]$ and $\delta_F[{t_{1}^f \otimes t_{1}^f+t_{2}^f \otimes t_{2}^f}]$ on faces.

\item {\bf \noindent $\div V_{h}^{2}\subset V_{h}^{3}$.}

For $c \in \mathbb V$, $t = t_1^f$ or $t_2^f$,
$$
\langle \div \delta_{f}[c t^T], \varphi \rangle:=\langle  \delta_{f}[ct^T], \grad\varphi \rangle=\int_{f}c\cdot (\varphi\nabla)\cdot t. 
$$
We can integrate by parts the tangential derivatives to edges and this becomes normal deltas on edges. 

\item 
{\bf \noindent $\mskw W_{h}^{0}\subset V_{h}^{1}$.} Obvious.

\item
{\bf \noindent $\mathcal{S} W_{h}^{1}\subset V_{h}^{2}$.} 
\begin{align*}
\langle &\mathcal S \delta_{f}[c\otimes n_f], \varphi \rangle:=\langle  \delta_{f}[c\otimes n_f], \varphi^{T}-\tr(\varphi)I \rangle\\&
=\int_{f}(n_f\cdot \varphi \cdot t_{1}^f)(c\cdot t_{1}^f) +(n_f\cdot \varphi \cdot t_{2}^f)(c\cdot t_{2}^f) +(n_f\cdot \varphi \cdot n_f)(c\cdot n_f) 
\\ &\qquad -(c\cdot n_f)(n_f\cdot \varphi \cdot n_f+t_{1}^f\cdot \varphi \cdot t_{1}^f+t_{2}^f\cdot \varphi \cdot t_{2}^f)
\\&
=\int_{f}(n_f\cdot \varphi \cdot t_{1}^f)(c\cdot t_{1}^f) +(n_f\cdot \varphi \cdot t_{2}^f)(c\cdot t_{2}^f)  -(c\cdot n_f)(t_{1}^f\cdot \varphi \cdot t_{1}^f+t_{2}^f\cdot \varphi \cdot t_{2}^f).
\end{align*}

\item 
{\bf \noindent $\vskw W_{h}^{2}\subset V_{h}^{3}$.}
$$
\langle \vskw \delta_e[c\otimes t_e] , \varphi \rangle:=\langle  \delta_e[c\otimes t_e], \mskw\varphi \rangle=\int_{e}c\cdot \mskw \varphi\cdot t_e=\int_{e}c\cdot \varphi\times t_e. 
$$
This corresponds to the normal component of $\varphi$ on edges.
\end{enumerate}

As we now have a BGG diagram \eqref{bgg-regge}, roughly speaking, we can eliminate the skew-symmetric components from the diagram and obtain the Regge sequence from \eqref{bgg-regge}. Nevertheless, to claim that the cohomology of \eqref{bgg-regge} is isomorphic to the cohomology of the Regge sequence (which we will derive in the next section), we should make a remark on the technicality. In \cite{arnold2021complexes,vcap2022bgg}, the BGG complexes involve spaces $\ran(S^{\bs})^{\perp}$, the orthogonal complement of the range of $S^{\bs}$, and operators $P_{\ran(S^{\bs})^{\perp}}$, the orthogonal projection to the corresponding spaces. We have not defined any inner product or orthogonality in the setting of \eqref{bgg-regge} yet. However, such orthogonality in \cite{arnold2021complexes,vcap2022bgg} is only to simplify the presentation, which is not essential in the spirit of \cite{vcap2001bernstein}. This can be seen from the presentation in \cite{vcap2022bgg}, where the pseudo-inverse of $S^{\bs}$, namely, the $T^{\bs}$ operators, are introduced. The identity $\ran(S^{\bs})^{\perp}=\ker(T^{\bs})$ implies that we can get rid of the inner product by replacing $\ran(S^{\bs})^{\perp}$ by $\ker(T^{\bs})$ and $P_{\ran(S^{\bs})^{\perp}}$ by $I - S^{\bs}\circ T^{\bs}$ (in the example of the elasticity complex, this means that $\ran(\mskw)^{\perp}=\ker(\vskw)$, and $\sym= I - \mskw\circ \vskw$). The proof of cohomology only relies on the latter forms.

In the setting of \eqref{bgg-regge}, we can define the $T^{\bs}$ operators. In fact, as the continuous level, define 
$$
T^{1}:=-\vskw, \quad T^{1}:=\mathcal{S}^{-1}, \quad T^{2}:=\frac{1}{2}\mskw. 
$$
It is obvious that $T^{1}$ maps $V_{h}^{1}$ to $W_{h}^{0}$ and $\mathcal{S}^{-1}$ maps $V_{h}^{2}$ to $W_{h}^{1}$ (as $\mathcal{S}$ is bijective).  It remains to show that $\vskw$ maps $V_{h}^{3}$ to $W_{h}^{2}$.  Let $\sigma=\sum_{e\in \mathsf{E}_{0}}\sum_{i=1}^{2}c_{e}^{i}\delta_e[{n_{i}}]\in V_{h}^{3}$ and $\varphi\in C_{c}^{\infty}(\mathbb{M})$.  Then 
$$
\langle \mskw \sigma, \varphi\rangle=\langle \sigma, \vskw\varphi\rangle=\sum_{e\in \mathsf{E}_{0}}\sum_{i=1}^{2}c_{e}^{i}\int_{e}\vskw\varphi\cdot n_{i}.
$$
Note that 
$$
\vskw \varphi\cdot n_{i}=\tr (\skw \varphi\times n_{i})=t_{e}\cdot \skw \varphi\times n_{i}\cdot t_{e}+ n_{1}\cdot \skw \varphi\times n_{i}\cdot n_{1}+ n_{2}\cdot \skw \varphi\times n_{i}\cdot n_{2}.
$$
The first term involves the tangential component of $\varphi$. For the last two terms, one cancels ($n_{i}\times n_{i}=0$) and another involves $n_{1}\times n_{2}=t_{e}$, which leads to the tangential component of $\varphi$.  This shows that $ \mskw \sigma$ is a sum of tangential delta on the edges, which is in $W_{h}^{2}$.

Now we have the necessary ingredients to run the BGG construction \cite{arnold2021complexes,vcap2022bgg} to obtain the discrete twisted complex \eqref{twisted-regge} and the discrete BGG complex \eqref{cplx:regge} (Regge sequence) from the diagram \eqref{bgg-regge}.  {The following result is a consequence.}
\begin{theorem}\label{thm:cohomology-twisted}
The sequence \eqref{twisted-regge} is a complex, and its cohomology is isomorphic to that of the Regge complex \eqref{cplx:regge}.
\end{theorem}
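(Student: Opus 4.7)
The plan is to apply the abstract BGG machinery of \cite{arnold2021complexes,vcap2022bgg,vcap2001bernstein} to the discrete diagram \eqref{bgg-regge}. All structural ingredients required by this machinery have been assembled in the preceding discussion: the horizontal differentials $d^\bullet$ within each row, the diagonal algebraic maps $-\mskw,\mathcal{S},2\vskw$ from the lower row to the upper row, and the pseudo-inverses $T^\bullet$ equal to $-\vskw,\mathcal{S}^{-1},\tfrac{1}{2}\mskw$, each verified to map within the indicated discrete spaces. It therefore suffices to carry out the two standard steps of the BGG construction in this discrete setting.

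For the complex property, the composition $\mathcal{A}^{k+1}\mathcal{A}^k$ factors as a block matrix whose diagonal entries are $d^{k+1}d^k$ and whose sole off-diagonal entry is $-(d^{k+1}S^k+S^{k+1}d^k)$. The diagonal vanishes because both $(V_h^\bullet,d^\bullet)$ and $(W_h^\bullet,d^\bullet)$ are themselves complexes -- the Regge sequence enriched with piecewise-constant skew-symmetric modes, and three copies of the distributional de~Rham complex, respectively. The off-diagonal anti-commutativity $d\circ S + S\circ d = 0$ is the pointwise algebraic identity already manifest in \eqref{bgg-continuous}; it transfers to the distributional level because each discrete operator is defined by duality against smooth test fields, on which the identity holds.

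For the cohomology isomorphism, I would follow the $T^\bullet$-formulation of BGG reduction in \cite{vcap2022bgg}: the identity $\ran(S^\bullet)^\perp = \ker(T^\bullet)$ highlighted in the text bypasses any need for an inner product. The BGG procedure produces a quasi-isomorphism between \eqref{twisted-regge} and a reduced complex on the spaces $\tilde V_h^k := \ker(T^k) \subset V_h^k$ (with $\tilde V_h^0 := V_h^0$), which one identifies termwise with the Regge complex \eqref{cplx:regge}: in particular $\ker(-\vskw) \cap V_h^1 = \reg$, and the analogous identifications produce $\lag$, $(\reg_0)'$, and $(\lag_0)'$ at the remaining degrees. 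The endpoint hypotheses needed by the machinery -- $-\mskw$ injective into $V_h^1$, $\mathcal{S}$ bijective on the relevant middle components of $V_h^2$ and $W_h^1$, and $2\vskw$ surjective onto $V_h^3$ -- follow from the explicit formulas and local dimension counts already in place (each edge and each face contributing its prescribed number of modes).

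The main obstacle I anticipate is technical rather than conceptual: since $V_h^{\ge 2}$ and $W_h^{\ge 1}$ consist of distributions supported on subsimplices, each algebraic composition $S\circ T$ and $T\circ S$ must be checked to land in the intended discrete space rather than producing stray lower-dimensional deltas. The carefully designed degrees of freedom -- in particular the three interior skew-symmetric modes added to $V_h^1$ and the mixed $nt_i$, $t_1 t_1^T + t_2 t_2^T$ face basis of $V_h^2$ -- were chosen precisely to ensure this closure, so the abstract BGG argument is expected to transfer verbatim to the discrete distributional setting.
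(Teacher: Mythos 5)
Your overall strategy coincides with the paper's: \Cref{thm:cohomology-twisted} is obtained by invoking the abstract BGG machinery of \cite{arnold2021complexes,vcap2022bgg} on the discrete diagram \eqref{bgg-regge}, once the inclusions $d^\bullet V_h^\bullet \subset V_h^{\bullet+1}$, $S^\bullet W_h^\bullet\subset V_h^{\bullet+1}$ and the well-definedness of the pseudo-inverses $T^\bullet$ have been checked --- which is exactly what \Cref{sec:discrete} does. Your treatment of the complex property is also fine: the diagonal blocks of $\mathcal{A}^{k+1}\mathcal{A}^k$ vanish because each row is a distributional complex, and the off-diagonal block vanishes by the pointwise identity $dS=-Sd$, which transfers to the distributional level by duality.

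There is, however, a concrete error in your identification of the BGG-reduced complex. You claim the reduction lands on $\tilde V_h^k:=\ker(T^k)\subset V_h^k$ for every $k$ and that this reproduces \eqref{cplx:regge}. This works only for $k=0,1$ (where $\ker(\vskw)\cap V_h^1=\reg$). For $k=2,3$ the operators $T^2=\mathcal{S}^{-1}$ and $T^3=\tfrac12\mskw$ are injective, so $\ker(T^2)=\ker(T^3)=0$; moreover $(\reg_0)'$ consists of edge deltas and $(\lag_0)'$ of vertex deltas, so neither can sit inside $V_h^2$ (face deltas) or $V_h^3$ (edge deltas). Followed literally, your recipe yields $0\to\lag\to\reg\to0\to0\to0$, which is not quasi-isomorphic to \eqref{twisted-regge} (for instance, its degree-one cohomology would be all of $\reg/\Def\lag$), so the claimed termwise identification with \eqref{cplx:regge} fails as stated. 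The correct description is that the BGG output switches rows at the crossing index: the spaces are $V_h^0=\lag$, $\ker(T^1)\cap V_h^1=\reg$, $\ker(S^2)\cap W_h^2=\ker(2\vskw)\cap W_h^2=\Span\{\delta_e[t_et_e^T]\}=(\reg_0)'$, and $W_h^3=(\lag_0)'$ (equivalently, in the formulation of \cite{vcap2022bgg}, the degree-$k$ BGG space is $(\ker T^k\cap V_h^k)\oplus(\ker S^k\cap W_h^k)$). With this correction the identification with \eqref{cplx:regge} is immediate and the rest of your argument goes through as in the paper.
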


\section{Cohomology of the Regge complex}\label{sec:cohomology}

In this section, we prove the following result. 
\begin{theorem}\label{thm:main}
The  cohomology of the Regge complex  \eqref{cplx:regge} is isomorphic to the de~Rham cohomology with $\mathcal{RM}$ coefficients $\mathcal{H}^{\bs}_{dR}(\Omega)\otimes \mathcal{RM}$. 
\end{theorem}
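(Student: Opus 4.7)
The plan is to leverage Theorem~\ref{thm:cohomology-twisted}, which identifies the cohomology of the Regge complex with that of the twisted complex \eqref{twisted-regge}, and then to compute the twisted cohomology via the natural short exact sequence of cochain complexes
\begin{equation*}
 0 \longrightarrow W_h^\bullet \longrightarrow \bigl(V_h^\bullet\oplus W_h^\bullet,\ \mathcal{A}^\bullet\bigr) \longrightarrow V_h^\bullet \longrightarrow 0.
\end{equation*}
By the Universal Coefficient and de~Rham Theorems recalled in Section~\ref{sec:notation}, the target $\mathcal{H}^k_{dR}(\Omega)\otimes \mathcal{RM}$ coincides with $\mathcal{H}_{3-k}(\Delta,\mathcal{RM};\partial\Delta)$, so it suffices to produce enough cochain-level information to run the long exact sequence in cohomology.

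I would first identify the cohomology of each extreme term. The lower row $W_h^\bullet$ is three copies of the lowest-order distributional de~Rham complex, which by the duality with the Whitney complex satisfies $H^k(W_h^\bullet)\cong \mathcal{H}^k_{dR}(\Omega)\otimes \V$. For the upper row $V_h^\bullet$, I would show by a local-to-global argument that $H^k(V_h^\bullet)\cong \mathcal{H}^k_{dR}(\Omega)\otimes \V$ as well: on each star $\st(\sigma)$, a Poincar\'e-type lemma for the enriched Regge space $V_h^1 = \reg\oplus \mskw(L^2_{\text{pw.\ const}})$ coupled with the face and edge distributional spaces yields local acyclicity, and a Mayer--Vietoris or filtration argument over the skeleton of $\Delta$ assembles these into the global cohomology. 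The long exact sequence then gives $H^k(\text{twisted})\cong \mathcal{H}^k_{dR}(\Omega)\otimes(\V\oplus\V)\cong \mathcal{H}^k_{dR}(\Omega)\otimes\mathcal{RM}$, provided the connecting homomorphism induced by $S^\bullet$ splits compatibly with $\mathcal{RM}\cong \V\oplus \V$; Theorem~\ref{thm:cohomology-twisted} then transfers the result to the Regge complex \eqref{cplx:regge}. As a sanity check, this matches the extreme cases: $H^0(\text{Regge})=\ker(\Def|_{\lag})=\mathcal{RM}^{\#\text{cc}(\Omega)}\cong \mathcal{H}^0_{dR}(\Omega)\otimes\mathcal{RM}$.

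The principal obstacle is verifying $H^k(V_h^\bullet)\cong \mathcal{H}^k_{dR}(\Omega)\otimes \V$, since $V_h^\bullet$ is a nonstandard hybrid of a Lagrange space at degree $0$, the Regge space enriched by piecewise-constant skew-symmetric matrices at degree $1$, and specific distributional spaces at degrees $2$ and $3$. Establishing local exactness on stars requires careful tracking of how the skew-symmetric enrichment contributes to $\curl$ and how it interacts with the face-supported space $V_h^2$; following the template of \cite{hu2023distributional}, I expect this to reduce to an explicit Poincar\'e-type computation on $\st(\sigma)$, with the global assembly proceeding by induction on simplex dimension. Verifying the compatibility of the connecting morphism with the splitting $\mathcal{RM}\cong \V\oplus \V$ is then a direct diagram chase once both rows of the short exact sequence have been computed.
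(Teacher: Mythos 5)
Your overall strategy inverts the paper's logic: the paper computes the cohomology of the Regge complex directly (through a chain of auxiliary complexes $C^{-1}\RM \to \ned \to \nedc \to \lag$ of increasing continuity, each step handled by a short exact sequence of cochain complexes and its long exact sequence), and only afterwards uses \Cref{thm:cohomology-twisted} to deduce the twisted cohomology. Using \Cref{thm:cohomology-twisted} in the other direction is legitimate in principle, but your route has a genuine gap: the claim $H^k(V_h^\bullet)\cong \mathcal H^k_{dR}(\Omega)\otimes\V$ is essentially as hard as \Cref{thm:main} itself. The complex $V_h^\bullet$ contains the Regge space at index $1$ (enriched by piecewise-constant skew matrices), and the paper explicitly notes that the standard localization machinery (Finite Element Systems, Licht's double-complex argument) does not obviously apply to Regge-type complexes; there is no off-the-shelf ``Poincar\'e lemma on stars plus Mayer--Vietoris'' for these spaces, because the tangential-tangential degrees of freedom and the face/edge-supported distributional spaces do not assemble into a sheaf-like structure in the naive way. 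Deferring this to ``an explicit Poincar\'e-type computation on $\st(\sigma)$ \ldots with global assembly by induction'' is precisely the content that Sections 5.1--5.5 of the paper are built to supply, so as written the proposal pushes the entire difficulty into an unproven lemma.

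Two further points need repair even granting that lemma. First, your short exact sequence is oriented the wrong way: since $\mathcal A(0,w)=(-S w,\, dw)$, the inclusion $w\mapsto(0,w)$ of $W_h^\bullet$ is \emph{not} a cochain map into the twisted complex; rather $V_h^\bullet$ is the subcomplex (via $v\mapsto(v,0)$) and $W_h^\bullet$ is the quotient, so the correct sequence is $0\to V_h^\bullet\to T_h^\bullet\to W_h^\bullet\to 0$ with connecting map $H^k(W_h^\bullet)\to H^{k+1}(V_h^\bullet)$ induced by $-S^k$. Second, the long exact sequence yields $H^k(T_h^\bullet)\cong H^k(V_h^\bullet)\oplus H^k(W_h^\bullet)$ only if these connecting maps vanish on cohomology; this is not automatic and must be checked by exhibiting, for each representative cocycle $w$ of $H^k(W_h^\bullet)$, an element $v\in V_h^k$ with $d v=S^k w$ (at $k=0$ this is $\mskw(c)=\grad(c\times x)$ with $c\times x\in\lag$, but at $k=1,2$ the representatives are distributional and the verification is nontrivial). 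Note that dimension counting alone cannot settle this, since the target $\mathcal H^k_{dR}(\Omega)\otimes\RM$ has exactly the dimension of $H^k(V_h^\bullet)\oplus H^k(W_h^\bullet)$, so a nonvanishing connecting map would produce a strictly smaller answer and contradict the theorem --- i.e., the vanishing is equivalent to what you are trying to prove and cannot be assumed.
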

Due to \Cref{thm:cohomology-twisted}, this also implies the cohomology of the twisted complex \eqref{twisted-regge}. 

The proof of \Cref{thm:main} follows several steps: we consider a series of auxiliary complexes and show that the cohomology of each of them is isomorphic to the $\mathcal{RM}$-valued de~Rham cohomology. The first one is \eqref{cplx:dgrm} below, starting with the space of piecewise rigid motion $C^{-1}\RM {\cong C_3(\Delta; \partial \Delta) \otimes \RM}$. The cohomology of this complex is identified with the simplicial homology. Then, we follow a series of steps to increase the regularity of \eqref{cplx:dgrm} to eventually get the Regge sequence.  More specifically, the second auxiliary sequence is \eqref{cplx:ned}, which starts from the first-type N\'ed\'elec element space $\ned$~\eqref{eq:ned} (imposing tangential continuity); the third one is \eqref{cplx:nedc}, starting with the second-type N\'ed\'elec element space $\nedc$~\eqref{eq:nedc}. With one more step of diagram chase, we derive the Regge sequence. In the rest of this section, we provide details. 


\begin{lemma}
\label{lem:integration-by-parts-inc}
    Suppose that $p = x \mapsto a + b \times x \in \mathcal{RM}$, and $A$ is a traceless matrix-valued function. Then $p \cdot \curl A \cdot n = \curl(p\cdot A)n + \frac{1}{2}n \cdot A \cdot \curl p$.
    \end{lemma}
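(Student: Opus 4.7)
The plan is to reduce the identity to a pointwise algebraic statement by expanding the right-hand side with the Leibniz rule. Writing $(p\cdot A)_m = p_i A_{im}$ and using the row-wise convention $(\curl A)_{ij} = \epsilon_{jlm}\partial_l A_{im}$, the product rule gives
\[
\curl(p\cdot A)\cdot n \;=\; p\cdot \curl A\cdot n \;+\; n_j\,\epsilon_{jlm}(\partial_l p_i)\,A_{im},
\]
so the lemma is equivalent to the pointwise identity
\[
-\,n_j\,\epsilon_{jlm}(\partial_l p_i)\,A_{im} \;=\; \tfrac{1}{2}\, n\cdot A\cdot \curl p.
\]

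Since $p(x)=a+b\times x$, one computes directly that $\partial_l p_i = \epsilon_{ikl} b_k$ (equivalently $\grad p = \mskw(b)$) and $\curl p = 2b$. Substituting the first expression into the left-hand side and contracting the two Levi-Civita symbols by the standard identity $\epsilon_{lmj}\epsilon_{lik} = \delta_{mi}\delta_{jk} - \delta_{mk}\delta_{ji}$ (after cyclically rearranging indices) produces exactly two terms, $-(n\cdot b)\,\tr A + n\cdot A\cdot b$. The first vanishes by the trace-free hypothesis on $A$, and the second equals $\tfrac{1}{2}\, n\cdot A\cdot \curl p$ because $\curl p = 2b$. This closes the identity.

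The only delicate point is that the trace-free condition on $A$ is exactly what is needed to kill the stray $(n\cdot b)\tr A$ term, which would otherwise spoil the identity; no analytic input is required beyond the product rule, as the statement is pointwise. A more coordinate-free view of the cancellation is that $\grad p = \mskw(b)$ encodes the single vector $b$, so the contraction of $A$ against $\partial p$ through the two $\epsilon$'s naturally splits into a trace piece and the direct pairing $n\cdot A\cdot b$, with trace-freeness discarding the former.
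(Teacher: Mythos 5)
Your proposal is correct and follows essentially the same route as the paper's proof: expand $\curl(p\cdot A)\cdot n$ by the Leibniz rule, identify the $p\cdot\curl A\cdot n$ term, and evaluate the remainder using $\partial_l p_i=\epsilon_{ikl}b_k$, the $\epsilon$--$\delta$ contraction identity, the trace-free hypothesis to kill the $(n\cdot b)\tr A$ term, and $\curl p = 2b$. The index conventions differ superficially from the paper's, but the computation and the role of each hypothesis are identical.
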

    \begin{proof}
We have 
    \begin{equation}
        \curl(p\cdot A)\cdot n = \curl(p^l A_{li} e^i) \cdot n = \varepsilon^{ij}_{~~k} \partial_i (p^l A_{lj}) n^k = \varepsilon^{ij}_{~~k} \partial_i p^l A_{lj} n^k + \varepsilon^{ij}_{~~k} p^l \partial_i A_{lj} n^k.
    \end{equation}
    The second term $\varepsilon^{ij}_{~~k} p^l \partial_i A_{lj} n^k$ is $p\cdot \curl A \cdot n$, which is our desired term. Suppose $p = a + b \times x$, the first term is 
    $$ \varepsilon^{ij}_{~~k} \partial_i p^l A_{lj} n^k = \varepsilon^{ij}_{~~k} \partial_i (\varepsilon^{l}_{~uv} b^u x^v) A_{lj} n_k = \varepsilon^{ij}_{~~k}\varepsilon^{l}_{~uv} b^u \partial_i (x^v) A_{lj} n^k.$$ 
    Since $\partial_i x^v = \delta_i^v$, we have
    \begin{equation}
    \begin{split}
        \varepsilon^{ij}_{~~k} \partial_i p^l A_{lj} n^k & =\varepsilon^{ij}_{~~k}\varepsilon^{l}_{~ui} b^u A_{lj} n^k = (\delta_{uk}\delta^{lj} - \delta_{u}^j\delta^l_k) b^u A_{lj} n^k = -  b^j A_{lj} n^l.
    \end{split}
    \end{equation}
    The last equation comes from $A_{lj}\delta^{lj} = 0$, which is due to the traceless property of $A$. Therefore, we have
    \begin{equation}
        \curl(p \cdot A)\cdot n = p \cdot \curl A \cdot n - \frac{1}{2} n\cdot A \cdot \curl p.
    \end{equation}
    Here we use $\curl(b \times x) = 2 b$.
\end{proof}

\begin{lemma}
\label{lem:symgrad}
Let $u$ be a vector-valued function. It holds that $\grad u : (a \otimes b) - \Def u : (a \otimes b) = -  \frac{1}{2}\curl u \cdot (a \times b)$. 
\end{lemma}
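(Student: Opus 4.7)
The plan is to isolate the antisymmetric contribution on the left and recognize it as a $\mskw$-image of $\curl u$. Since $\grad u = \sym(\grad u) + \skw(\grad u) = \Def u + \skw(\grad u)$, the statement reduces to proving
\[
\skw(\grad u) : (a \otimes b) \;=\; -\tfrac{1}{2}\,\curl u \cdot (a \times b).
\]

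First I would rewrite the double contraction as $A : (a\otimes b) = \sum_{ij}A_{ij}a_i b_j = a\cdot A \cdot b$, so that the left-hand side becomes $a \cdot \skw(\grad u) \cdot b$. Next, using the row-wise convention $(\grad u)_{ij}=\partial_j u_i$ from Section \ref{sec:notation} and the Levi-Civita identity $\varepsilon_{ijk}\varepsilon_{klm}=\delta_{il}\delta_{jm}-\delta_{im}\delta_{jl}$, I would verify that
\[
\skw(\grad u)_{ij} \;=\; \tfrac{1}{2}(\partial_j u_i - \partial_i u_j) \;=\; -\tfrac{1}{2}\,\varepsilon_{ijk}(\curl u)_k,
\]
which is precisely $\mskw(\tfrac{1}{2}\curl u)_{ij}$ under the sign convention for $\mskw$ fixed in Section \ref{sec:notation}. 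In other words, $\skw(\grad u)=\mskw(\tfrac12\curl u)$.

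Finally, invoking the defining identity $\mskw(v)w = v\times w$ together with the scalar triple product relation $a\cdot(v\times b) = -v\cdot(a\times b)$ gives
\[
a \cdot \skw(\grad u)\cdot b \;=\; a\cdot \bigl(\tfrac{1}{2}\curl u \times b\bigr) \;=\; -\tfrac{1}{2}\,\curl u \cdot (a\times b),
\]
as claimed. The entire argument is a short algebraic identity; there is no genuine obstacle, only careful bookkeeping of the row-vs-column convention for $\grad$ and the sign convention for $\mskw$ specified in Section \ref{sec:notation}.
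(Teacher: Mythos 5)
Your proof is correct and follows essentially the same computation as the paper's: both reduce to the Levi--Civita contraction identity $\varepsilon_{ijk}\varepsilon_{klm}=\delta_{il}\delta_{jm}-\delta_{im}\delta_{jl}$ applied to the skew-symmetric part of $\grad u$. The only cosmetic difference is that you package the calculation as the identity $\skw(\grad u)=\mskw(\tfrac12\curl u)$ followed by the scalar triple-product relation, whereas the paper expands both sides directly in components and compares.
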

\begin{proof}
The left-hand side is $\frac{1}{2} (\partial_j u_i - \partial_i u_j) a^i b^j $, and the right-hand side is $$- \frac{1}{2} \varepsilon_{~jk}^i [\curl u]_i a^j b^k = - \frac{1}{2} \varepsilon_{~jk}^i \varepsilon_{i}^{~lm} \partial_l u_m a^j b^k = -\frac{1}{2} (\delta_{j}^l \delta_{k}^m - \delta_{j}^m \delta_{k}^l) \partial_l u_m a^j b^k,$$
which is equal to the left-hand side.
\end{proof}

\subsection{The first auxiliary complex}
\label{sec:dgrm}

In this section we construct a discrete complex that starts with discontinuous $\RM$ space. The idea of introducing this complex is to provide a complex that can be related to the simiplicial relative homology, which is isomorphic to the de~Rham cohomology. To this end, we set $
    C^{-1}\RM= \{ u \in L^2(\Delta) : u|_{K} \in \RM \}
$
be the discontinuous piecewise $\RM$ space. 

For any $\sigma \in C_c^{\infty}(\Omega; \bS)$, it holds that 
\begin{equation}
    \label{eq:def}
\begin{split}
\langle \sym\grad u, \sigma \rangle & = - \langle u, \div\sigma \rangle  = -\int_{\Omega} u \cdot \div \sigma \\ 
& = -\sum_{f \in \mathsf F_0} \int_f \jump{u}_f \cdot \sigma \cdot n_f = - \sum_{f \in \mathsf F_0} \mathcal O(f,K) \int_f u|_K\cdot \sigma\cdot n_f.
\end{split}
\end{equation}
 Here $\jump{u}_f = \sum_{K} \mathcal O(f,K) u|_K$ is the jump of $u$ across the face $f$. 
This invokes us to set the following distribution for any $\mathbb M$-valued function $\sigma$:
$$\varphi_{f}^1[p] :  \sigma \mapsto -\int_f \sym \sigma :(p \otimes n_{f}).$$
Clearly, $\varphi_{f}^1[p]$ is a symmetric distribution. 
The above calculation can be summarized as:
\begin{equation}
\label{eq:def-varphi}
\Def u =  \sum_{f} \mathcal O(f,K) \varphi_{f}^1[u|_{K}].
\end{equation}

Let $\bm X^1 := \Span\{\varphi_{f}^1[p] : f \in \mathsf F_0, p \in \mathcal{RM} \}$. Then $\Def \bm X^0 \subset \bm X^1$.   
Following this idea, we now determine the remaining two distribution spaces $\bm X^2$ and $\bm X^3$. 
Set the $\bM$ valued distribution
\begin{equation}
\label{eq:varphi2}
\varphi_{e}^2[p] :  \sigma \mapsto -\int_{e} \sym \sigma : \frac{1}{2}( \curl p \otimes t_e)- \int_{e}   ({\nabla\times}(\sym\sigma) ) : (p \otimes t_e) \in \mathcal D'(\Omega; \mathbb R^{3\times 3}),
\end{equation}
and the $\mathbb V$ valued distribution
\begin{equation}
\label{eq:varphi3}
\varphi_{x}^3[p] : v\mapsto \frac{1}{2}(\curl v \cdot p)(x) +  \frac{1}{2} (v \cdot \curl p)(x)  \in 
\mathcal D'(\Omega; \mathbb R^{3}).
\end{equation}

Based on these distributions, we define $\bm X^2 = \Span\{ \varphi_e^2[p]: p \in \RM, e \in \mathsf E_0\}$ and $\bm X^3 = \Span\{ \varphi_x^3[p]: p \in \RM, x \in \mathsf V_0\}.$

\begin{lemma}
    \label{lem:varphi}
It holds that
\begin{equation}
\label{eq:inc-varphi1}
\inc \varphi_f^1[p] = \sum_{e \in \mathsf E_0} \mathcal O(e,f) \varphi_e^2[p],
\end{equation}
and
\begin{equation}
\label{eq:div-varphi2}
    \div \varphi_e^2[p] = \sum_{x \in \mathsf V_0} \mathcal O(x,e) \varphi_x^3[p]. 
\end{equation}
\end{lemma}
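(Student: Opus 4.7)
The plan is to prove both identities in \Cref{lem:varphi} by pairing with a smooth test field and integrating by parts, using \Cref{lem:integration-by-parts-inc} and \Cref{lem:symgrad} as the main algebraic ingredients.

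For \eqref{eq:inc-varphi1}, I would pair $\inc\varphi_f^1[p]$ with a smooth symmetric matrix field $\tau$ and use the distributional adjoint together with $\inc\tau=\curl(T(\curl\tau))$ to reduce the claim to computing $-\int_f p\cdot\curl(T(\curl\tau))\cdot n_f$. A short index check shows that $A:=T(\curl\tau)$ is traceless for symmetric $\tau$ (since $\tr\curl\tau=\epsilon_{jkl}\partial_k\tau_{jl}=0$), so \Cref{lem:integration-by-parts-inc} produces the pointwise identity $p\cdot\curl A\cdot n_f=\curl(p\cdot A)\cdot n_f+\tfrac12\,n_f\cdot A\cdot\curl p$. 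Integrating over $f$ and applying the 2D Stokes' theorem to the first summand gives $\sum_{e\in\partial f}\mathcal O(e,f)\int_e t_e\cdot\curl\tau\cdot p$, which agrees (via symmetry of $\tau$, i.e.\ $\nabla\times\tau=(\curl\tau)^T$) with the second term in the definition \eqref{eq:varphi2} of $\varphi_e^2[p]$. For the second summand, because $\curl p$ is a constant vector on $\Omega$ for $p\in\RM$, I rewrite $n_f\cdot A\cdot\curl p=\curl p\cdot\curl\tau\cdot n_f=\curl(\curl p\cdot\tau)\cdot n_f$ and apply Stokes' a second time, obtaining $\tfrac12\sum_{e\in\partial f}\mathcal O(e,f)\int_e\tau:(\curl p\otimes t_e)$. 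Combining matches \eqref{eq:varphi2} and yields \eqref{eq:inc-varphi1}.

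For \eqref{eq:div-varphi2}, I would pair $\div\varphi_e^2[p]$ with a smooth vector $v$, reducing the identity to $-\varphi_e^2[p](\Def v)=\sum_{x\in\mathsf V_0}\mathcal O(x,e)\varphi_x^3[p](v)$. From $\grad v=\Def v+\tfrac12\mskw(\curl v)$, $\curl\grad v=0$, and the identity $\curl(\mskw w)=(\nabla\cdot w)I-\nabla w$ combined with $\nabla\cdot\curl v=0$, I derive $\curl\Def v=\tfrac12\nabla(\curl v)$. Using the rigid-motion identity $(t_e\cdot\nabla)p=\tfrac12\,\curl p\times t_e$, a direct computation gives
\[
  t_e\cdot\curl\Def v\cdot p=\tfrac12\partial_{t_e}(\curl v\cdot p)-\tfrac14\curl v\cdot(\curl p\times t_e),
\]
while \Cref{lem:symgrad} and the constancy of $\curl p$ give
\[
  \curl p\cdot\Def v\cdot t_e=\partial_{t_e}(\curl p\cdot v)+\tfrac12\curl v\cdot(\curl p\times t_e).
\]
Summing these with the weights $1$ and $\tfrac12$ dictated by the two terms in \eqref{eq:varphi2}, the transverse contributions $\curl v\cdot(\curl p\times t_e)$ cancel exactly, leaving $\tfrac12\partial_{t_e}[\curl v\cdot p+v\cdot\curl p]$. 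The fundamental theorem of calculus on $e$ then returns $\tfrac12[\curl v\cdot p+v\cdot\curl p]|_{\partial e}=\sum_{x\in\mathsf V_0}\mathcal O(x,e)\varphi_x^3[p](v)$ by the definition \eqref{eq:varphi3}.

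The main obstacle is organizing the algebra so that the transverse cancellation in \eqref{eq:div-varphi2} and the row-wise versus column-wise convention matching in \eqref{eq:inc-varphi1} become transparent. Both succeed only because $p\in\RM$ makes $\curl p$ constant and $\grad p=\mskw(\tfrac12\curl p)$ skew-symmetric; once these two facts are in hand, the remainder is routine bookkeeping of the orientation signs $\mathcal O(\cdot,\cdot)$ supplied by Stokes' theorem and the FTC.
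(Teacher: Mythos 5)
Your proposal is correct and follows essentially the same route as the paper: pair with a smooth test field, invoke \Cref{lem:integration-by-parts-inc} (after noting $\tr\curl\tau=0$ for symmetric $\tau$) plus Stokes' theorem on $f$ for \eqref{eq:inc-varphi1}, and \Cref{lem:symgrad} together with $\curl\deff v=\tfrac12\nabla(\curl v)$ plus the fundamental theorem of calculus on $e$ for \eqref{eq:div-varphi2}. The only cosmetic difference is that you make the cancellation of the transverse terms $\curl v\cdot(\curl p\times t_e)$ explicit and derive the identity $\curl\deff v=\tfrac12\nabla(\curl v)$, which the paper uses without proof.
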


\begin{proof}[Proof of \Cref{lem:varphi}]

    By definition, $
    \langle \inc \sigma, \tau \rangle = \langle \sigma, \inc \tau \rangle$
    for $\tau \in C_c^{\infty}(\Omega; \bS)$.

    Set $\sigma = \varphi^1_f[p]$ for some interior face $f \in \mathsf F_0$ and $p \in \RM$. Using \Cref{lem:integration-by-parts-inc}, we have 
    \begin{equation}
        \label{eq:inc-lem1}
    \begin{split}
        \langle \inc \sigma, \tau \rangle = & - \int_{f} p \cdot {\curl (\nabla\times \tau)}\cdot n_f \\
        = & -\int_f \curl(p \cdot {\nabla\times}\tau ) \cdot n_f -\frac{1}{2} \int_{f} (\curl p)\cdot  \curl\tau \cdot n_f \\
        = & - \int_{\partial f} p\cdot ({\nabla\times}\tau )\cdot  t_{\partial f} -\frac{1}{2} \int_{\partial f} (\curl p) \cdot  \tau \cdot t_{\partial f}.
    \end{split}
    \end{equation}
    Here, the second line comes from \eqref{lem:integration-by-parts-inc} and the last line comes from the Stokes' lemma and the fact that $\curl p$ is a constant.

    Next, we calculate $\div \varphi_e^2[p]$ to obtain \eqref{eq:div-varphi2}. 
    For $v \in C_c^{\infty}(\Omega; \mathbb V)$, it follows that
    \begin{equation}
    \begin{split}
     \langle \div \varphi_e^2[p], v \rangle & = 
      \frac{1}{2} \int_{e} \sym \grad v : (\curl p\otimes t_e ) + \int_{e} {\nabla\times}(\sym \grad v) : ( p\otimes t_e).
    \end{split}
    \end{equation}
    
    By \Cref{lem:symgrad}, the first term is 
    \begin{equation}
    \begin{split}
    \int_e \sym \grad v : (\curl p \otimes t_e ) = & \int_e  \grad v : (\curl p \otimes t_e) + \frac{1}{2} \curl v \cdot (\curl p \times t_e)\\ 
     = &  \int_e \frac{\partial}{\partial t} (v \cdot \curl p) + \frac{1}{2} \curl v \cdot  (\curl p \times t_e) \\ 
     = & \int_e \frac{\partial}{\partial t} (v \cdot \curl p) +\curl v \cdot \frac{\partial}{\partial t} p,
    \end{split}
    \end{equation}
{where we have used, for $p=a+b\times x$,
    $
  \frac{1}{2}\curl p\times t_e=b\times t_e=t_e\cdot \mskw b= t_e\cdot \nabla p.
    $
    }

    The second term is 
    \begin{equation}
    \begin{split}
    \int_{e} {\nabla\times}(\sym \grad v ) : (p \otimes t_e) & = \frac{1}{2} \int_e (\nabla\times v)\nabla : (p \otimes t_e) = \frac{1}{2} \int_e \frac{\partial}{\partial t} \curl v \cdot p 
    \end{split}
    \end{equation}
    
    Combining the above two terms, we have 
    $$\langle \div \varphi_e^2[p], v \rangle =\frac{1}{2}\int_e  \frac{\partial}{\partial t} (v \cdot \curl p) + \frac{1}{2} \int_e \frac{\partial}{\partial t} (p \cdot \curl v).$$
    Therefore, we conclude \eqref{eq:div-varphi2}.
\end{proof}

\begin{proposition}
\label{prop:hom-dgrm}
The sequence 
\begin{equation}
        \label{cplx:dgrm}
    \begin{tikzcd}
   0 \ar[r] & C^{-1}\RM \ar[r,"\Def"] & \bm X^1 \ar[r,"\inc"] & \bm X^2 \ar[r,"\div"] & \bm X^3 \ar[r] & 0
    \end{tikzcd}
\end{equation}
is a complex, and the cohomology is isomorphic to $\cH^{\bs}_{dR}(\Omega)\otimes\RM  .$
\end{proposition}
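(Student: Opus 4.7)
The plan is to identify the complex \eqref{cplx:dgrm} with the relative simplicial chain complex $(C_{3-\bs}(\Delta;\partial\Delta)\otimes\RM,\,\partial\otimes\mathrm{id})$ and then invoke the Universal Coefficient Theorem together with the de~Rham theorem stated in \Cref{sec:notation}. Concretely, I would define linear maps
\begin{equation*}
\Phi_0 : C_3(\Delta;\partial\Delta)\otimes\RM\to C^{-1}\RM,\quad K\otimes p\mapsto p\cdot \mathbf{1}_K,
\end{equation*}
and, for $k=1,2,3$, $\Phi_k : C_{3-k}(\Delta;\partial\Delta)\otimes\RM\to \bm X^k$ sending $\sigma\otimes p$ to $\varphi^k_\sigma[p]$. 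These maps are surjective by the very definition of $C^{-1}\RM$ and of $\bm X^k$.

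Under these identifications, the operators $\Def$, $\inc$, $\div$ correspond to $\partial_3\otimes\mathrm{id}_\RM$, $\partial_2\otimes\mathrm{id}_\RM$, $\partial_1\otimes\mathrm{id}_\RM$, respectively. This is immediate from \eqref{eq:def-varphi} and \Cref{lem:varphi}: each of those formulas writes the action of the differential operator on a generator $\varphi^k_\sigma[p]$ as a signed sum over the combinatorial boundary of $\sigma$, preserving the $\RM$-coefficient $p$. Consequently, once the $\Phi_k$ are shown to be isomorphisms, the sequence \eqref{cplx:dgrm} is a complex because $\partial^2=0$ descends from $C_\bs(\Delta)$ to the quotient $C_\bs(\Delta;\partial\Delta)$.

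Cohomology then follows directly. Since tensoring with a finite-dimensional real vector space is exact, the cohomology of $(C_{3-\bs}(\Delta;\partial\Delta)\otimes\RM,\,\partial\otimes\mathrm{id})$ at position $k$ is $H_{3-k}(\Delta;\partial\Delta)\otimes\RM$; this is the content of the stated Universal Coefficient Theorem in this setting. Applying the de~Rham theorem $H_{3-k}(\Delta;\partial\Delta)\cong H^k_{dR}(\Omega)$ gives $H^k_{dR}(\Omega)\otimes\RM$ at position $k$, which assembles to the desired $\mathcal H^{\bs}_{dR}(\Omega)\otimes\RM$.

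The main obstacle is showing that the $\Phi_k$ are injective, i.e., that $\{\varphi^k_\sigma[p]\}$ is linearly independent as $\sigma$ ranges over interior $(3-k)$-simplices and $p$ over $\RM$. Given a vanishing linear combination $\sum_\sigma\varphi^k_\sigma[p_\sigma]=0$, I would test against a smooth function supported in a small tubular neighborhood of a chosen interior simplex $\sigma_0$ that avoids other simplices of the same dimension, thereby isolating the single term $\varphi^k_{\sigma_0}[p_{\sigma_0}]$, and then show that the linear map $\varphi^k_{\sigma_0}[\cdot]:\RM\to\mathcal D'(\Omega)$ is injective. For $\varphi^3_x$ this is straightforward since both $v(x)$ and $\curl v(x)$ can be prescribed freely, so the six parameters in $p=a+b\times x$ can be read off. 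For $\varphi^1_f$ and $\varphi^2_e$ one must build symmetric matrix-valued test functions on the simplex whose pairings separate the translational part $a$ from the rotational part $b$; the $\curl p$ terms appearing in \eqref{eq:varphi2} and \eqref{eq:varphi3} are what make recovery of $b$ feasible once $a$ has been determined.
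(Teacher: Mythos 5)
Your proposal is correct and follows essentially the same route as the paper: the paper's proof sets up exactly this commutative diagram, with isomorphisms $\kappa^k$ inverse to your $\Phi_k$, and concludes via the commutativity supplied by \eqref{eq:def-varphi} and \Cref{lem:varphi} together with the Universal Coefficient and de~Rham theorems. The one point you treat more carefully than the paper --- verifying that the generators $\varphi^k_\sigma[p]$ are linearly independent, so that the identification with $C_{3-k}(\Delta;\partial\Delta)\otimes\RM$ is genuinely an isomorphism --- is left implicit there, and your localization-plus-test-function argument is the right way to settle it.
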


\begin{proof}
The cohomology is indicated by the following diagram. 
\begin{equation}
    \label{cd:rm-kappa}
\begin{tikzcd}
0 \ar[r] & C^{-1}\RM \ar[r,"\Def"] \ar[d,"\kappa^0"] & \bm X^1 \ar[r,"\inc"] \ar[d,"\kappa^1"]& \bm X^2 \ar[r,"\div"] \ar[d,"\kappa^2"]& \bm X^3 \ar[r]\ar[d,"\kappa^3"] & 0 \\ 
0\ar[r] & \bigoplus_{K \in \mathsf K} \RM \ar[r,"\partial"] & \bigoplus_{f \in \mathsf F_0} \RM \ar[r,"\partial"] & \bigoplus_{e \in \mathsf E_0} \RM \ar[r,"\partial"] &\bigoplus_{v \in \mathsf V_0} \RM \ar[r] & 0.
\end{tikzcd}
\end{equation}

Here, we define $\kappa^0(u) = \sum_{K} K \otimes u|_K $, $\kappa^1(\varphi_f^1[p]) = f \otimes p$, $\kappa^2(\varphi_e^2[p]) = e \otimes p$ and $\kappa^3(\varphi_x^3[p]) =x \otimes p$. It follows from \eqref{eq:def-varphi} and \Cref{lem:varphi} that the diagram commutes. Since all $\kappa^k$ are isomorphisms, we conclude the result. 
\end{proof}

\subsection{The second auxiliary complex: tangential continuity}
\label{sec:ned}
Next, we impose tangential continuity to the $C^{-1}\RM$ space. The resulting space is the first-type N\'ed\'elec element space $\ned$~\eqref{eq:ned}. 
Clearly, $\ned$ is a subspace of $C^{-1}\RM$, consisting of all the functions with tangential continuity. 
The degrees of freedom are given by $u \mapsto \int_e u \cdot t_e$ for each edge $e \in \mathsf E$.

For later use, we introduce the rigid motion spaces on faces and edges. Define the projection $\Pi_f(v) = v \times n_f \times n_f = v - (v\cdot n_f) n_f$, which maps $\mathbb V$ to the tangential space of $f$.  Define the projection $\Pi_e(v) = (v \cdot t_e)t_e$, which maps $\mathbb V$ to the tangential space of $e$. 
Set $\RM(f) = \Pi_f \RM$ and $\RM(e) = \Pi_e \RM$  be the rigid motion on faces and edges. Clearly, $\dim \RM(f) = 3$ and $\dim \RM(e) = 1$.

Consider the following sequence:
\begin{equation}
    \label{cplx:rm-aux}
    \begin{tikzcd} 0 \ar[r] & \bigoplus\limits_{K \in \mathsf K} \RM \ar[r, "\partial \circ \Pi"] &  \bigoplus\limits_{f \in \mathsf F_0} \RM(f) \ar[r, "\partial \circ \Pi"] & \bigoplus\limits_{e \in \mathsf E_0} \RM(e) \ar[r] & 0 \ar[r] & 0.
    \end{tikzcd}
\end{equation}

Here we define for $u_{K}\in \mathcal{RM}$ and $u_{f}\in \mathcal{RM}(f)$, 
$$(\partial \circ \Pi) (\sum_{K \in \mathsf K} K \otimes u_K ) = \sum_{K \in \mathsf K} \sum_{f \in \mathsf F_0} \mathcal O(f,K) [f \otimes \Pi_f u_K],$$
and 
$$(\partial \circ \Pi) (\sum_{f \in \mathsf F_0} f \otimes u_f ) = \sum_{f \in \mathsf F_0} \sum_{e \in \mathsf E_0} \mathcal O(e,f) [e \otimes \Pi_e u_f]. $$

This follows from the fact $\Pi_e \circ \Pi_f =  \Pi_e$, and $\partial \circ \partial = 0.$ This complex (of the arbitrary dimension) was considered in \cite{licht2017complexes}, where the following result is proved there.

\begin{lemma}
\label{lem:hom-rm-aux}
The cohomology of \eqref{cplx:rm-aux} is 
    \begin{equation}
        \begin{tikzcd}
            0 \ar[r] & \ned \ar[r] &  0 \ar[r] &  0 \ar[r] &  0.
        \end{tikzcd}
    \end{equation}

\end{lemma}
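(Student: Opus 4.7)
The plan is to verify the claimed cohomology of \eqref{cplx:rm-aux} by establishing exactness at each of the three positions, in the spirit of \cite{licht2017complexes}.

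For the kernel at the leftmost position, I would observe that $\eta = \sum_K K\otimes u_K$ maps under $\partial\circ\Pi$ to $\sum_f \sum_{K\supset f} \mathcal O(f,K)\, f \otimes \Pi_f u_K$, which vanishes on every interior face $f = K^+ \cap K^-$ precisely when $\Pi_f u_{K^+} = \Pi_f u_{K^-}$. Since $\Pi_f$ is the tangential projection, this is the defining tangential-trace continuity of $\ned \subset C^{-1}\RM$, so $\ker(\partial\circ\Pi) = \ned$.

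For surjectivity at the rightmost position, I would build a preimage one interior edge at a time. For each $e \in \mathsf E_0$, the projection $\Pi_e : \RM \to \RM(e)$ is surjective, and for any interior face $f\supset e$ the identity $\Pi_e \circ \Pi_f = \Pi_e$ implies that $\Pi_e : \RM(f) \to \RM(e)$ is also surjective. Pick a favored interior face $f_e \supset e$ and solve for $u_{f_e} \in \RM(f_e)$ matching the prescribed $v_e$; then iteratively cancel cross-contributions produced on other edges of $f_e$ by reusing the same construction.

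For exactness at the middle position, I would work with a spanning tree $T$ of the dual graph of cells (whose edges correspond to interior faces). Root $T$ at some cell $K_0$, fix $u_{K_0}$ arbitrarily, and propagate outward: across each in-tree face $f$ I use surjectivity of $\Pi_f : \RM \to \RM(f)$ to adjust $u_K$ on the next cell so that the signed difference $\Pi_f(u_{K^+} - u_{K^-})$ equals the prescribed $u_f$. The matching condition on the non-tree faces then becomes a closure condition around the independent cycles of the dual graph; in the star $\st(e)$ of each interior edge, the ring of faces around $e$ gives such a cycle, and the edge-kernel condition $\sum_{f \supset e} \mathcal O(e,f) \Pi_e u_f = 0$ provides exactly the obstruction needed to close up the lift.

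The main obstacle is this last closure check. It requires identifying a basis of dual-graph cycles with loops around interior edges in $\mathsf E_0$ and verifying that, after applying $\Pi_e$, the cycle condition collapses to the edge-kernel condition through the compatibility $\Pi_e \circ \Pi_f = \Pi_e$. A careful local analysis in each star $\st(e)$, exploiting the short exact sequences $0 \to \ker \Pi_f \to \RM \to \RM(f) \to 0$ and $0 \to \ker(\Pi_e|_{\RM(f)}) \to \RM(f) \to \RM(e) \to 0$, completes the argument.
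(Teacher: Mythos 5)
Your identification of the kernel at the first position is correct, but the heart of the lemma---exactness at the middle and right positions---is exactly the part you leave as ``the main obstacle,'' and the strategy you sketch for it has a genuine gap. The paper's proof rests on one structural observation that your plan is missing: the spaces $\RM$, $\RM(f)$, $\RM(e)$ are spanned by the Whitney forms $\mu_e$, and the projections act diagonally on this basis, $\Pi_f\mu_e^K=\mu_e^f$ and $\Pi_e\mu_e^f=\mu_e^e$. Consequently the whole complex \eqref{cplx:rm-aux} splits as a direct sum, over all edges $e\in\mathsf E$, of \emph{scalar} chain complexes supported on the star $\st(e)$. Each such star is topologically trivial (the link of $e$ is a circle or an arc), so each summand has cohomology $\bR$ at the first position and nothing elsewhere, independently of the topology of $\Omega$; summing over $e$ recovers $\ned$. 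No global combinatorics ever enters.

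By contrast, your spanning-tree argument on the dual graph runs into two concrete problems. First, the propagation step is underdetermined: the face condition only constrains the three-dimensional tangential projection $\Pi_f(u_{K^+}-u_{K^-})$, so $u_{K}$ is not determined by its predecessor along the tree, and the ``holonomy'' around a dual cycle is not a well-defined sum of the prescribed $u_f$'s. Second, the cycle space of the dual graph is \emph{not} generated by the elementary loops around interior edges when $\Omega$ has nontrivial topology (the quotient is a homological invariant of $\Omega$), so the edge-kernel conditions $\sum_{f\supset e}\mathcal O(e,f)\Pi_e u_f=0$ do not a priori close up the lift around all independent cycles---yet the lemma asserts vanishing of the middle cohomology for arbitrary polyhedral $\Omega$. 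Your proposed repair, ``a careful local analysis in each star $\st(e)$,'' cannot see these global cycles. Similarly, the ``iteratively cancel cross-contributions'' step for surjectivity at the last position is not shown to terminate. All of these issues dissolve once you pass to the $\mu_e$-components first and only then argue on each (contractible) edge star; I would recommend restructuring the proof around that decomposition.
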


\begin{proof}
Note that the rigid motion space (on face, on edge, resp.) is spanned by the Whitney forms $\mu_e$, which is the basis function with respect to $u\mapsto \int_e u\cdot t_e$. We distinguish the basis function in cells, faces and edges by $\mu_e^K$, $\mu_e^f$ and $\mu_e^e$, respectively. 

It then follows that $\Pi_f \mu_e^K = \mu_e^f$, and $\Pi_e \mu_e^f = \mu_e^e$.
Given $K$ and $e \subset K$, it holds that 
$(\partial \circ \Pi)  K \otimes \mu_e^K  = \sum_{f: f\subset K} \mathcal O(f,K) [f\otimes \mu_e^f] .$
Similarly, given $f$ and $e \subset f$, it holds that 
$(\partial \circ \Pi) f\otimes \mu_e^f  = \sum_{e: e\subset f} \mathcal O(e,f)[ e \otimes \mu_e^e] .$

Therefore, by reformulating \eqref{cplx:rm-aux} by $\mu_e^{\bs}$, it follows that the complex \eqref{cplx:rm-aux} is a direct sum of 
\begin{equation}
\begin{tikzcd}
    0 \ar[r] & \bigoplus\limits_{K \in \st(e)} \bR \ar[r,"\partial"] &  \bigoplus\limits_{f \in \mathsf F_0(\st(e))} \bR \ar[r,"\partial"] &  \bigoplus\limits_{e \in \mathsf E_0(\st(e))} \mathbb R \ar[r] &  0 \ar[r] & 0.
\end{tikzcd}
\end{equation}
Since $\st(e)$ is topologically trivial, the cohomology is 
\begin{equation}
    \begin{tikzcd}
        0 \ar[r] & \bR \ar[r,"\partial"] &  0 \ar[r,"\partial"] & 0 \ar[r] &  0 \ar[r] & 0.
    \end{tikzcd}
    \end{equation}
Therefore, the cohomology of \eqref{cplx:rm-aux} is $\ned$, the span of all Whitney form $\mu_e$. 
\end{proof}

We now derive the complex starting with $\ned$ by chasing the following diagram. 

\begin{equation}
    \label{cd:DGRM-Ned}
\begin{tikzcd}
0 \ar[r] & C^{-1}\RM \ar[r,"\Def"] \ar[d,"id"] & \bm X^1 \ar[r,"\inc"] \ar[d,"g^1"] & \bm X^2 \ar[d,"g^2"] \ar[r,"\div"] & \bm X^3 \ar[d] \ar[r] & 0\\
0 \ar[r] & \bigoplus\limits_{K \in \mathsf K} \RM \ar[r, "\partial\circ\Pi"] &  \bigoplus\limits_{f \in \mathsf F_0} \RM(f) \ar[r, "\partial\circ\Pi"] & \bigoplus\limits_{e \in \mathsf E_0} \RM(e) \ar[r] & 0 \ar[r] & 0.
\end{tikzcd}
\end{equation}
Here, the upper row is the $C^{-1}\RM$ complex \eqref{cplx:dgrm}, whose cohomology is $\mathcal H_{dR}^{\bs}(\Omega) \otimes \mathcal{RM}.$ 

We set
 $g^1(\varphi_f^1[p]) = f \otimes \Pi_f(p) $, and 
 $g^2(\varphi_e^2[p]) = e \otimes \Pi_e(p) $. Clearly, $g^1$ and $g^2$ are surjective. It is straightforward to verify that the diagram \eqref{cd:DGRM-Ned} commutes.

The commuting property is indicated by \Cref{prop:hom-dgrm}. 
Regarding \eqref{cd:DGRM-Ned} as a (co)chain mapping of two cochain complexes, we define the kernel of such complex. 
\begin{equation}
    \begin{tikzcd}
        0 \ar[r] & 0 \ar[r] & \ker(g^1) \ar[r,"\inc"] & \ker(g^2) \ar[r,"\div"] & \bm X^3 \ar[r] & 0
         \end{tikzcd}
        \end{equation}

We specify the kernels as follows. $\ker(g^1)$ contains the distributions $\varphi_f^1[p]$ such that $\Pi_f p = 0$. This implies that $p = v n_f$ for some $v \in \cP_1(f)$. 

Therefore, it holds that 
$$\ker(g^1) = \bm \Phi := \Span\Big\{\varphi_f^1[pn_f] : p \in \cP_1(f), f \in \mathsf F_0\Big\} \subset \bm X^1.$$
Similarly, we can define 
$$\ker(g^2) := \hat{\bm X}^2 := \Span\Big\{ \varphi_e^1[p]: p \in \RM, p\cdot t_e = 0, e \in \mathsf E_0\Big\}.$$

\begin{remark}
    In fact, the space $\bm \Phi$ can be derived from a direct calculation of $\Def u$ for $u \in \ned.$ 
By \eqref{eq:def}, it holds that $
\Def u = \sum_{f \in \mathsf F_0} \varphi_f^1\Big[\jump{u}_f \Big].$ 
Since $u \cdot n_f$ is continuous, the jump term $\jump{u}_f = \jump{u \cdot n_f}_f n_f$ is a purely normal vector term, whose value is in $\cP_1(f) n_f$. Therefore, $\Def \ned \subset \bm \Phi$. 
\end{remark}

The following result follows from the above observation and \Cref{lem:hom-rm-aux}.

\begin{proposition}
\label{prop:ned}
The sequence 
\begin{equation}
\label{cplx:ned}
    \begin{tikzcd}
        0 \ar[r] & \ned \ar[r, "\Def"] & \bm \Phi \ar[r,"\inc"] & \hat{\bm X}^2 \ar[r,"\div"] & \bm X^3 \ar[r] & 0
         \end{tikzcd}
        \end{equation}
        is a complex. The cohomology of \eqref{cplx:ned} is $\mathcal H^{\bs}_{dR}(\Omega)\otimes \RM $.
\end{proposition}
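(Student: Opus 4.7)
The plan is to exhibit the commuting diagram \eqref{cd:DGRM-Ned} as a short exact sequence of cochain complexes $0 \to K^{\bs} \to T^{\bs} \to B^{\bs} \to 0$, where $T^{\bs}$ is the top row (the complex \eqref{cplx:dgrm}), $B^{\bs}$ is the bottom row (the complex \eqref{cplx:rm-aux}, padded with $0$ in degree $3$), and $K^{\bs}$ is the associated kernel complex $0 \to 0 \to \bm\Phi \to \hat{\bm X}^2 \to \bm X^3 \to 0$. I would first verify this really is a short exact sequence at each cochain degree: the vertical maps $id, g^1, g^2$ are surjective (the text notes this), and commutativity of \eqref{cd:DGRM-Ned} forces the differentials of $T^{\bs}$ to carry kernels into kernels (for example, for $\phi \in \bm\Phi = \ker g^1$, $g^2(\inc\phi) = (\partial\circ\Pi)(g^1\phi) = 0$, so $\inc\phi \in \hat{\bm X}^2$). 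This makes $K^{\bs}$ an honest subcomplex.

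Next I would feed this short exact sequence into the long exact cohomology sequence. Using \Cref{prop:hom-dgrm} ($H^k(T) = \mathcal{H}^k_{dR}(\Omega)\otimes\RM$) and \Cref{lem:hom-rm-aux} ($H^0(B) = \ned$, $H^k(B) = 0$ for $k\geq 1$), the sequence collapses to $H^k(K) \cong H^k(T) = \mathcal{H}^k_{dR}(\Omega)\otimes\RM$ for $k = 2, 3$, together with a four-term exact sequence
\begin{equation*}
0 \to \mathcal{H}^0_{dR}(\Omega)\otimes\RM \to \ned \xrightarrow{\delta^0} H^1(K) \to \mathcal{H}^1_{dR}(\Omega)\otimes\RM \to 0.
\end{equation*}
The crucial step here is to identify the connecting map $\delta^0$ with the differential $\Def$ of \eqref{cplx:ned}: by the snake lemma, one takes $u \in \ned \cong H^0(B)$, lifts along the vertical isomorphism $id$ to $u \in C^{-1}\RM$, and applies $\Def$; by the remark preceding the proposition, $\Def u$ lies in $\bm\Phi = \ker g^1$, and its class in $H^1(K) = \ker(\inc|_{\bm\Phi})$ is $\delta^0[u]$.

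Finally, I would read off the cohomology of \eqref{cplx:ned}. Since \eqref{cplx:ned} agrees with $K^{\bs}$ in degrees $k \geq 2$, $H^k(\eqref{cplx:ned}) = H^k(K) \cong \mathcal{H}^k_{dR}(\Omega)\otimes\RM$ for $k = 2, 3$. At degree $1$, $H^1(\eqref{cplx:ned}) = H^1(K)/\Def(\ned) = H^1(K)/\mathrm{image}(\delta^0) \cong \mathcal{H}^1_{dR}(\Omega)\otimes\RM$ by exactness of the four-term sequence. At degree $0$, $H^0(\eqref{cplx:ned}) = \ker(\Def|_\ned) = \ker(\delta^0) \cong \mathcal{H}^0_{dR}(\Omega)\otimes\RM$; equivalently, by the linear independence of the distributions $\varphi_f^1[\jump{u}_f]$ across distinct interior faces, $\Def u = 0$ forces all normal jumps of $u$ to vanish, making $u$ globally continuous and piecewise $\RM$, hence globally $\RM$ on each connected component. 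The main obstacle in this plan is the snake-lemma identification of $\delta^0$ with $\Def$ (and the matching of indices between $K^{\bs}$ and \eqref{cplx:ned}); the rest is bookkeeping through the long exact sequence.
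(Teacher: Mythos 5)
Your proposal is correct and follows essentially the same route as the paper: both form the short exact sequence of cochain complexes from the commuting diagram \eqref{cd:DGRM-Ned} (kernel complex, the $C^{-1}\RM$ complex, and the quotient complex \eqref{cplx:rm-aux}), invoke \Cref{prop:hom-dgrm} and \Cref{lem:hom-rm-aux} in the resulting long exact sequence, and identify the connecting map $\ned \to \ker(\inc)\cap\bm\Phi$ with $\Def$ to read off the cohomology in each degree. The only presentational difference is that you make the snake-lemma identification of the connecting map explicit, where the paper states it more briefly.
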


\begin{proof}
Denote by $\mathscr C$ the complex \eqref{cplx:ned}. 
Note that the sequence of cochain complexes:
\begin{equation}
\begin{tikzcd}
0 \ar[r] & 0 \ar[r, "\Def"] \ar[d] & \bm \Phi \ar[r,"\inc"] \ar[d] & \hat{\bm X}^2 \ar[r,"\div"] \ar[d]& \bm X^3 \ar[r] \ar[d] & 0 \\
0 \ar[r] & C^{-1}\RM \ar[r,"\Def"] \ar[d,"id"] & \bm X^1 \ar[r,"\inc"] \ar[d,"g^1"] & \bm X^2 \ar[d,"g^2"] \ar[r,"\div"] & \bm X^3 \ar[d] \ar[r] & 0\\
0 \ar[r] & \bigoplus\limits_{K \in \mathsf K} \RM \ar[r, "\partial\circ\Pi"] &  \bigoplus\limits_{f \in \mathsf F_0} \RM(f) \ar[r, "\partial\circ\Pi"] & \bigoplus\limits_{e \in \mathsf E_0} \RM(e) \ar[r] & 0 \ar[r] & 0
\end{tikzcd}
\end{equation}
is exact, which leads to the following long exact sequence:
\begin{equation}
    \begin{tikzcd}[column sep=small]
      0 \arrow{r} & 0 \arrow{r} \arrow{d} &\ker(\inc) \cap \bm \Phi \arrow{r}\arrow{d} & \mathcal H^2(\mathscr C) \arrow{r} \arrow{d} & \mathcal H^3(\mathscr C) \arrow{r} \arrow{d} & 0 \\
   0 \arrow{r} &  \mathcal H^0_{dR}(\Omega)\otimes \RM \arrow[r, ""{coordinate, name=Z}] \arrow{d} &  \mathcal  H^1_{dR}(\Omega)\otimes \mathcal{RM{ }} \arrow[r, ""{coordinate, name=Y}] \arrow{d} &  \mathcal H^2_{dR}(\Omega) \otimes \mathcal{RM} \arrow[r, ""{coordinate, name=X}] \arrow{d}&  \mathcal H^3_{dR}(\Omega) \otimes \mathcal{RM} \ar[r] \ar[d]&  0\\
    0 \arrow{r}&\ned \arrow{r} \arrow[uur, rounded corners, dashed, to path={ -- ([yshift=-4ex]\tikztostart.south)
    -| (Z) [near end]\tikztonodes
    |- ([yshift=4ex]\tikztotarget.north)
    -- (\tikztotarget)}]
    & 0 \arrow{r} \arrow[uur, rounded corners, dashed, to path={ -- ([yshift=-4ex]\tikztostart.south)
    -| (Y) [near end]\tikztonodes
    |- ([yshift=4ex]\tikztotarget.north)
    -- (\tikztotarget)}] &0  \arrow{r} \arrow[uur, rounded corners, dashed, to path={ -- ([yshift=-4ex]\tikztostart.south)
    -| (X) [near end]\tikztonodes
    |- ([yshift=4ex]\tikztotarget.north)
    -- (\tikztotarget)}] & 0 \ar[r] & 0.
     \end{tikzcd}
    \end{equation}
It can be argued that the linking map from $\ned$ to $\ker(\inc) \cap \bm \Phi$ is $\Def$. It follows that $\mathcal  H^2(\mathscr C) \cong \mathcal H^2_{dR}(\Omega) \otimes \RM$ and $\mathcal H^3(\mathscr C) \cong \mathcal H^3_{dR}(\Omega) \otimes \RM$. 
The long exact sequence reads as 
\begin{equation}
\label{cplx:long-exact}
\begin{tikzcd}[column sep = small]
 0 \ar[r] & \cH^0_{dR}(\Omega) \otimes\RM \ar[r] & \ned \ar[r,"\Def"] & \ker(\inc) \cap \bm \Phi \ar[r,"j"] &  \cH^1_{dR}(\Omega)\otimes\RM \ar[r] & 0.
\end{tikzcd}
\end{equation}

Observe that 
$ \mathcal H^0(\mathscr C) :=\ker(\Def) \cap \ned$ and 
$\displaystyle \mathcal H^1(\mathscr C) := {[\ker(\inc)\cap \bm \Phi]}/{\Def \ned}.$
The exactness of \eqref{cplx:long-exact} tells us that 
$$\ker(\Def) \cap \ned =  \cH^0_{dR}(\Omega)\otimes\RM ,$$
and 
$$[{\ker(\inc) \cap \bm \Phi}]/{\Def \ned} =  [{\ker(\inc) \cap \bm \Phi}]/{\ker(j)} \cong \ran(j) = \cH^1_{dR}(\Omega) \otimes \RM.$$
This concludes the result. 

\end{proof}
\begin{remark}
A similar idea is adopted in \cite{hu2023distributional} to calculate the cohomology of the distributional Hessian complexes in 2D and 3D. 
\end{remark}

\subsection{The third auxiliary complex: enriching the local shape function space}
\label{sec:nedc}
In this subsection, we construct a complex that starts with the space $\nedc$~\eqref{eq:nedc}. 
The introduction of the second type N\'ed\'elec element is motivated by the following observations. First, the shape function is $\mathcal P_1$, which is the same as $\lag$. Second, the two spaces $\nedc$ and $\ned$ both have the tangential continuity, which implies that additional distribution terms would not appear. 
Here we introduce a modified set of degrees of freedom of $\nedc$.
\begin{lemma}
\label{lem:nedc-uni}
For $u \in\cP_1 \otimes \mathbb V$, define the following degrees of freedom:
\begin{enumerate}
\item[(1)] $\displaystyle \int_e (u\cdot t_e)$ for each edge $e$; 
\item[(2)] $\displaystyle \int_e (\frac{\partial}{\partial t_e}u\cdot t_e)$ for each edge $e$.
\end{enumerate}
Then, the degrees of freedom are unisolvent. The resulting finite element space are $\nedc$. 
\end{lemma}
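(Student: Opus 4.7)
The plan is to reduce the claim to the standard unisolvence of the lowest-order second-kind N\'ed\'elec element by showing that on each edge the two new functionals carry exactly the same information as the classical edge moments against $\mathcal P_1(e)$.

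First I would carry out a dimension count on a single tetrahedron $K$: the local shape space $\mathcal P_1(K)\otimes\mathbb V$ has dimension $12$, and the proposed set contains one functional of type (1) and one of type (2) on each of the six edges of $K$, giving $12$ functionals. Thus unisolvence is equivalent to injectivity of the DoF map.

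The key observation is that for $u\in\mathcal P_1(K)\otimes\mathbb V$ and a fixed edge $e$ with endpoints $x_0,x_1$ and unit tangent $t_e$, the scalar function $\phi_e:=t_e\cdot u|_e$ lies in $\mathcal P_1(e)$. Parametrising $e$ by arclength, we have
\begin{equation*}
\int_e \phi_e = |e|\,\tfrac12(\phi_e(x_0)+\phi_e(x_1)),\qquad \int_e \partial_{t_e}\phi_e = \phi_e(x_1)-\phi_e(x_0).
\end{equation*}
The $2\times 2$ linear map sending $(\phi_e(x_0),\phi_e(x_1))$ to the pair above is clearly invertible, so the vanishing of (1) and (2) on $e$ is equivalent to $\phi_e(x_0)=\phi_e(x_1)=0$, i.e.\ $t_e\cdot u\equiv 0$ on $e$. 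Equivalently, this is the same information as $\int_e (u\cdot t_e)\,p=0$ for all $p\in\mathcal P_1(e)$, which is the standard set of edge moments defining $\nedc$.

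Once this equivalence is in place, the classical unisolvence of the second-kind N\'ed\'elec element (with DoFs $u\mapsto \int_e (u\cdot t_e)\,p$, $p\in\mathcal P_1(e)$), recalled in \eqref{eq:nedc}, immediately yields injectivity of the new DoF map on $\mathcal P_1(K)\otimes\mathbb V$, hence unisolvence. The same equivalence globally identifies the assembled finite element space with $\nedc$, since inter-element tangential continuity is imposed through conditions involving only $t_e\cdot u|_e$, which is controlled identically by either set of edge moments. The only mildly subtle point is checking that the DoF of type (2) is well-defined on a shared edge independently of the incident cell, but this is automatic because $\partial_{t_e}(t_e\cdot u)$ is a tangential-tangential derivative and $t_e\cdot u|_e$ depends only on the edge's geometry. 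I do not expect any serious obstacle here; the content of the lemma is essentially the elementary change of basis on $\mathcal P_1(e)$ described above.
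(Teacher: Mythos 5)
Your proposal is correct and follows essentially the same route as the paper: count $12$ degrees of freedom against $\dim(\mathcal P_1\otimes\mathbb V)=12$, show that the pair of functionals on each edge determines $u\cdot t_e$ on that edge (you do this via an explicit invertible $2\times2$ change of basis on $\mathcal P_1(e)$, the paper by noting the derivative moment forces $u\cdot t_e$ to be constant and the zeroth moment then kills it), and conclude from the standard second-kind N\'ed\'elec unisolvence that $u=0$, with the single-valuedness of $u\cdot t_e$ identifying the assembled space as $\nedc$.
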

\begin{proof}
We first show the unisolvency. The number of degrees of freedom is $12=3 \dim \mathcal P_1$. Next, suppose that $u \in \mathcal P_1 \otimes \mathbb V$ vanishes for those degrees of freedom. Then, we have $\frac{\partial}{\partial t} u\cdot t_e = 0$ on each edge. This implies that $u\cdot t_e$ is a constant along the edge. Further, the first degrees of freedom indicates that $u \cdot t_e = 0 $ on each edge. 

As a result, we can conclude that $u = 0$, which proves the unisolvency. The proof also shows that $u\cdot t_e$ is single-valued, which implies that the resulting finite element space is $\nedc$.
\end{proof}

Locally, we have the following exact sequence
$$\begin{tikzcd} \RM \ar[r] & \mathcal P_1 \otimes \mathbb V \ar[r,"\Def"] & \bS \ar[r] & 0.
\end{tikzcd}$$
For the global version, the piecewise $\deff$ operator maps to the Regge space. 

\begin{lemma}
\label{lem:def-nedc}
It holds that $ \Def_h\nedc \subset \reg$. Here $\Def_h$ is the piecewise deformation. Consequently, $ \Def \nedc \subset \reg \oplus \bm \Phi$.
\end{lemma}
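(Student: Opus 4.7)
The plan is to handle the two assertions separately. For the first claim, $\Def_h \nedc \subset \reg$, I would begin by noting that since $u|_K \in \mathcal{P}_1(K)\otimes\mathbb{V}$ on each tetrahedron, $(\Def u)|_K = \sym\grad (u|_K)$ is a single constant symmetric matrix on $K$. Thus $\Def_h u$ already has the right piecewise-constant symmetric-matrix shape to belong to $\reg$; the only nontrivial requirement is tangential-tangential continuity across internal faces.

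To verify that continuity, I would use the pointwise identity, valid for constant vectors $s,t$,
\begin{equation*}
s\cdot \Def u\cdot t \;=\; \tfrac{1}{2}\bigl[(s\cdot\nabla)(u\cdot t) + (t\cdot\nabla)(u\cdot s)\bigr].
\end{equation*}
Apply this on an internal face $f$ with $s,t$ tangent to $f$. Since $u\in\nedc$ has continuous tangential trace, the scalar functions $u\cdot s$ and $u\cdot t$ are single-valued on $f$, and the directional derivatives $s\cdot\nabla$ and $t\cdot\nabla$ are tangential to $f$, hence act intrinsically on the face. Therefore both terms on the right-hand side agree from the two adjacent cells, giving $n_f\times \Def_h u\times n_f$ continuous across $f$ and hence $\Def_h u\in\reg$.

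For the second statement, I would appeal to the distributional formula \eqref{eq:def-varphi}, which expresses the full distributional $\Def u$ as $\Def_h u$ plus jump terms $\sum_{f\in\mathsf F_0}\mathcal O(f,K)\varphi_f^1[u|_K] = \sum_f \varphi_f^1[\jump{u}_f]$ concentrated on internal faces. Because the tangential components of $u$ are continuous across $f$, the jump is purely normal: $\jump{u}_f = \jump{u\cdot n_f}_f\, n_f$. On each side of $f$, $u\cdot n_f$ is the trace of a $\mathcal{P}_1$ vector field, so $\jump{u\cdot n_f}_f\in\mathcal P_1(f)$, and by the definition of $\bm\Phi$ each such jump distribution lies in $\bm\Phi$. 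Combined with the first part, this gives $\Def u\in \reg\oplus\bm\Phi$.

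The only delicate step is the tangential-tangential continuity argument; once the identity for $s\cdot\Def u\cdot t$ is in hand, the continuity follows immediately from the defining tangential continuity of $\nedc$, and the remainder is a straightforward consequence of the distributional computation \eqref{eq:def-varphi} together with the normal structure of the jumps.
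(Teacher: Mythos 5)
Your proof is correct, and the first half takes a genuinely different route from the paper's. For $\Def_h\nedc\subset\reg$, the paper does not argue face-by-face: it invokes the modified degrees of freedom of \Cref{lem:nedc-uni}, observes that the edge moments $\int_e \frac{\partial}{\partial t_e}(u\cdot t_e)=\int_e t_e\cdot(\Def_h u)\cdot t_e$ are single-valued, and then appeals to the characterization of $\reg$ by its tangential--tangential edge degrees of freedom (implicitly using that the tangential--tangential part of a piecewise constant symmetric matrix on a face is determined by its three edge values). You instead verify the defining face condition $n_f\times\sigma\times n_f$ continuous directly, via the pointwise identity $s\cdot\Def u\cdot t=\tfrac12[(s\cdot\nabla)(u\cdot t)+(t\cdot\nabla)(u\cdot s)]$ for constant tangential $s,t$; since these directional derivatives are intrinsic to the face and the tangential traces of $u$ are single-valued there, continuity follows. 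Your argument is more self-contained (it does not rely on the unisolvency statement of \Cref{lem:nedc-uni}), while the paper's is shorter given that the edge DOFs have already been set up and reuses them later in the construction of the lifting $\mathcal L$. For the second assertion, your computation via \eqref{eq:def-varphi} and the purely normal jump $\jump{u}_f=\jump{u\cdot n_f}_f\,n_f\in\cP_1(f)\,n_f$ is exactly the paper's argument. No gaps.
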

\begin{proof}[Proof of \Cref{lem:def-nedc}]
    By the definition of $\nedc$, for $v \in \nedc$, the moment
    $\int_{e} \frac{\partial}{\partial t_e} (v\cdot t_e)$
    is single-valued for each edge $e \in \mathsf E$.
    Taking $\sigma = \Def_h v$ yields that 
    $\int_{e} t_e \cdot \sigma \cdot t_e$ 
    is single-valued for each edge $e$. It follows from the definition of $\reg$ that $\Def_h v \in \reg.$
    
    Now we calculate $\Def v$ by definition. 
    It holds that for $\tau \in C_c^{\infty}(\Omega; \bS)$, 
    \begin{equation}
    \begin{split}
        \langle \Def w, \tau \rangle & = -\langle w, \div \tau \rangle 
     = - \int_{\Omega} w \cdot \div \tau \\ 
    &  = \int_{\Omega} \Def_h w : \tau - \sum_{f \in \mathsf F_0} \int_f \jump{w}_f \cdot (\tau n_f) \\ 
    & = \int_{\Omega} \Def_h w : \tau - \sum_{f \in \mathsf F_0} \int_f \jump{w\cdot n_f}_f \cdot (n_f\cdot \tau \cdot n_f).
    \end{split}
    \end{equation}
    Here, the last line comes from the tangential contniuity of $w \in \nedc$. 
    Note that the first term lies in $\reg$ and the second term lies in $\bm \Phi$. Therefore, we prove that $\Def \nedc \subset \reg \oplus \bm \Phi$.
    \end{proof}

Moreover, we have the following result.
\begin{proposition}
    \label{prop:nedc}
    The sequence
    \begin{equation}
        \label{cplx:nedc}
    \begin{tikzcd}
   0 \ar[r] & \nedc \ar[r,"\Def"] & \reg \oplus \bm \Phi \ar[r,"\inc"] & \hat{\bm X}^2 \ar[r,"\div"] & \bm X^3 \ar[r] & 0
    \end{tikzcd}
\end{equation}
is a complex. The cohomology is isomorphic to $ \cH^{\bs}_{dR}(\Omega)\otimes\RM $.
\end{proposition}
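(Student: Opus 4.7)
The plan is to run an argument parallel to \Cref{prop:ned}, realising \eqref{cplx:nedc} as an enrichment of \eqref{cplx:ned} and controlling the difference via a short exact sequence of cochain complexes. Let $\mathscr C_{\mathrm{Ned}}$ denote the complex \eqref{cplx:ned} and $\mathscr C_{\mathrm{Nedc}}$ the target complex \eqref{cplx:nedc}. There is a natural injection $\mathscr C_{\mathrm{Ned}} \hookrightarrow \mathscr C_{\mathrm{Nedc}}$ given by $\ned \hookrightarrow \nedc$ on position $0$, the embedding $\bm \Phi \hookrightarrow \reg \oplus \bm \Phi$ into the second summand on position $1$, and the identity on positions $2$ and $3$. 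Commutativity of the squares follows because $\Def_h u = 0$ whenever $u \in \ned$ (as $\RM \subset \ker \Def$ pointwise), so $\Def u$ for $u \in \ned$ lands entirely in $\bm \Phi$.

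First I would verify that \eqref{cplx:nedc} is actually a complex. The containment $\Def \nedc \subset \reg \oplus \bm \Phi$ is \Cref{lem:def-nedc}, while $\inc \bm \Phi \subset \hat{\bm X}^2$ and $\div \hat{\bm X}^2 \subset \bm X^3$ are part of \Cref{prop:ned}. Vanishing of compositions then follows from the distributional integration by parts identities $\inc \circ \Def = 0$ and $\div \circ \inc = 0$ on smooth test functions. The only new ingredient is $\inc \reg \subset \hat{\bm X}^2$. From the original Regge sequence \eqref{cplx:regge}, $\inc \sigma$ for $\sigma \in \reg$ is a linear combination of the edge deltas $\delta_e[t_e t_e^T]$, so it suffices to rewrite each such delta as a $\varphi_e^2[p]$ with $p \in \RM$ and $p \cdot t_e = 0$. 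Taking $p(x) = \tfrac{1}{2} t_e \times (x - x_0)$ for any fixed $x_0 \in e$ works: then $p$ vanishes along $e$ (so the $\curl \sigma$ integral in \eqref{eq:varphi2} drops out) while $\curl p = t_e$ makes the remaining term a scalar multiple of $\int_e t_e \cdot \sigma \cdot t_e$.

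The cokernel complex of the inclusion is
\begin{equation*}
\mathscr Q :\ 0 \to \nedc/\ned \xrightarrow{\Def_h} \reg \to 0 \to 0 \to 0,
\end{equation*}
and the key step is to prove $\mathscr Q$ is acyclic. The map $\Def_h : \nedc/\ned \to \reg$ is well defined because $\Def_h \ned = 0$. Injectivity is immediate: if $u \in \nedc$ and $\Def_h u = 0$, then $u|_K \in \RM$ on each cell, which together with the tangential continuity of $\nedc$ forces $u \in \ned$. Surjectivity follows by a dimension count using \Cref{lem:nedc-uni}, since $\dim(\nedc/\ned) = \#_{\mathsf E} = \dim \reg$. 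With $\mathscr Q$ acyclic, the long exact sequence attached to $0 \to \mathscr C_{\mathrm{Ned}} \to \mathscr C_{\mathrm{Nedc}} \to \mathscr Q \to 0$ shows that the inclusion is a quasi-isomorphism, and \Cref{prop:ned} finishes the proof. The main obstacle I anticipate is the explicit identification $\delta_e[t_e t_e^T] \in \hat{\bm X}^2$: the distribution $\varphi_e^2[p]$ couples a value-of-$\sigma$ term with a $\curl \sigma$ term, and one must choose $p$ delicately so that only the pure $t_e \otimes t_e$ contribution survives.
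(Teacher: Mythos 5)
Your proof is correct. The complex part coincides with the paper's: both reduce $\inc\reg\subset\hat{\bm X}^2$ to exhibiting $\delta_e[t_et_e^T]$ as $\varphi_e^2[p]$ for a suitable $p\in\RM$ vanishing on $e$ with $\curl p$ parallel to $t_e$ (the paper takes $p=t_e\times(x-x_0)$, you take half of it; only the normalization differs), and both invoke \Cref{lem:def-nedc} and the containments already established for \eqref{cplx:ned}.

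For the cohomology part your route is organized differently from the paper's, though the underlying content is the same. The paper argues directly at the cochain level: it builds an explicit right inverse $\mathcal L$ of $\Def_h:\nedc\to\reg$ from the degrees of freedom of \Cref{lem:nedc-uni}, sets $\cK\sigma=\Def\mathcal L\sigma-\sigma\in\bm\Phi$, and uses \Cref{lem:K} to show that $(\sigma,\psi)\mapsto\psi-\cK\sigma$ induces isomorphisms of the cohomology groups of \eqref{cplx:nedc} with those of \eqref{cplx:ned}. You instead embed \eqref{cplx:ned} into \eqref{cplx:nedc}, identify the cokernel complex $0\to\nedc/\ned\xrightarrow{\Def_h}\reg\to 0\to 0\to 0$, prove it acyclic (injectivity from $\ker\Def_h\cap\nedc=\ned$, surjectivity from the dimension count $\dim(\nedc/\ned)=\#_{\mathsf E}=\dim\reg$), and conclude via the long exact sequence. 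The two are equivalent: the paper's $\mathcal L$ is precisely an explicit witness of the surjectivity you obtain by counting, and $\cK$ is the correction your connecting-map formalism hides. Your version is more uniform with the paper's treatment of \Cref{prop:ned} and of the final theorem (which also use short exact sequences of cochain complexes), and it avoids the slightly ad hoc ``case-by-case'' verification; the paper's version has the merit of producing the explicit homotopy-type operator $\cK$, which is reused verbatim in the statement of \Cref{lem:K}. One small point worth making explicit in your write-up: the quotient differential $\nedc/\ned\to\reg$ is the $\reg$-component of $\Def$, which equals $\Def_h$ precisely by the computation in the proof of \Cref{lem:def-nedc}; and the sum $\reg\oplus\bm\Phi$ is indeed direct (an $L^2$ function plus a measure supported on faces), so projecting onto $\reg$ is legitimate.
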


\begin{proof}[Proof of \Cref{prop:nedc}: Complex part] 
To show \eqref{cplx:nedc} 
is a complex, it remains show that $\inc \reg \subset \hat{\bm{X}}^2$. By the fact that the Regge sequence is a complex, it holds that $\inc \reg\subset {(\reg_0)'}$. It suffices to show 
$(\reg_0)' \subset \hat{\bm X}^2$.

Choose any point $x_0 \in e$, and set $p = t_e \times  x - t_e \times x_0$. Therefore, $p = 0$ on $e$, and $\curl p = 2t_e$. Substituting these into \eqref{eq:varphi2}, we obtain that 
\begin{equation}\label{eq:delta_tt}
\varphi_e^2[p] = \sigma \mapsto -\int_e \sym \sigma :(t_e \otimes t_e) =  -\delta_{e}[t_e t_e^T].\end{equation}
Therefore, we prove that $(\reg_0)' \subset {\hat{\bm{X}}^2}$, which implies the result. 
\end{proof}

Now we consider the cohomology of \eqref{cplx:nedc}. The key is that for functions $\sigma \in \reg$, we can always find $w \in \nedc$ such that $\Def_h w = \sigma$. 
In fact, by the degrees of freedom, we construct $w \in \nedc$ such that 
$$\int_e (w\cdot t_e) = 0,
\text{  and  } 
\int_e (\frac{\partial}{\partial t_e} w \cdot t_e) = \int_{e} t_e \cdot \sigma \cdot t_e.$$

Therefore, it holds that $\Def_h w = \sigma$. Denote by $w := \mathcal L\sigma$ and define $$\cK\sigma := \Def \mathcal L\sigma - \sigma \in \bm \Phi.$$ 

\begin{lemma}
\label{lem:K}
For $\sigma \in \reg$ and $\psi \in \bm \Phi$, the following results hold:
    \begin{enumerate}
    \item $\inc \cK \sigma = -\inc \sigma$;
    \item $\inc(\sigma + \psi) = 0$ if and only if $\inc(\psi - \cK \sigma) = 0$;
    \item $\sigma + \psi \in \Def \nedc$ if and only if $\psi - \cK \sigma \in \Def \ned.$
    \end{enumerate}
\end{lemma}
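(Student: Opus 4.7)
The plan is to prove the three statements essentially by unwinding the definition $\cK\sigma = \Def\mathcal{L}\sigma - \sigma$, relying on three ingredients: the complex property $\inc\circ\Def=0$, the decomposition result in \Cref{lem:def-nedc} (with the fact that $\reg\cap\bm\Phi=\{0\}$ as distributions), and the inclusion $\ned\subset\nedc$.

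For (1), I apply $\inc$ to $\cK\sigma = \Def\mathcal{L}\sigma - \sigma$ and use that $\inc\Def=0$ (which holds since $\mathcal{L}\sigma\in\nedc\subset$ the setting where $\Def$ lands in $\reg\oplus\bm\Phi$ and $\inc$ annihilates this image, as \eqref{cplx:nedc} is a complex — already established at this point of the proof of \Cref{prop:nedc}). This gives $\inc\cK\sigma = -\inc\sigma$ immediately. For (2), by linearity and (1),
\begin{equation*}
\inc(\psi-\cK\sigma) = \inc\psi - \inc\cK\sigma = \inc\psi + \inc\sigma = \inc(\sigma+\psi),
\end{equation*}
so the two vanishing conditions coincide.

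For (3), the forward direction is the delicate one. Suppose $\sigma+\psi = \Def v$ for some $v\in\nedc$. Then
\begin{equation*}
\psi - \cK\sigma = \psi - \Def\mathcal{L}\sigma + \sigma = \Def v - \Def\mathcal{L}\sigma = \Def(v-\mathcal{L}\sigma).
\end{equation*}
I now need $v-\mathcal{L}\sigma\in\ned$. Since $v,\mathcal{L}\sigma\in\nedc$, the difference is in $\nedc$, so in particular it has tangential continuity. By \Cref{lem:def-nedc}, the $\Def$ image decomposes uniquely in $\reg\oplus\bm\Phi$ as $\Def_h(\cdot)$ plus a face-supported part in $\bm\Phi$; matching $\sigma+\psi=\Def v$ with the decomposition forces $\Def_h v=\sigma=\Def_h\mathcal{L}\sigma$. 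Hence $\Def_h(v-\mathcal{L}\sigma)=0$, which means $v-\mathcal{L}\sigma$ is piecewise in $\RM$. Combined with tangential continuity, this places $v-\mathcal{L}\sigma$ in $\ned$ by \eqref{eq:ned}. The converse direction is straightforward: if $\psi-\cK\sigma = \Def u$ for $u\in\ned$, then $\sigma+\psi = \Def\mathcal{L}\sigma + \Def u = \Def(\mathcal{L}\sigma+u)$, and since $\ned\subset\nedc$ we have $\mathcal{L}\sigma+u\in\nedc$.

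The main obstacle I expect is the forward direction of (3): it requires justifying that $\Def_h v = \sigma$, i.e.\ that the Regge part of $\Def v$ in the decomposition $\reg\oplus\bm\Phi$ is exactly the piecewise deformation. This rests on the disjointness of the two summands as distribution spaces (Regge functions are $L^2$, while elements of $\bm\Phi$ are genuine face-supported Dirac distributions) and on the explicit decomposition formula derived in the proof of \Cref{lem:def-nedc}. Once this identification is in hand, everything else reduces to algebraic manipulation.
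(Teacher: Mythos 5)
Your proof is correct and takes essentially the same route as the paper: parts (1)--(2) by applying $\inc$ to $\cK\sigma=\Def\cL\sigma-\sigma$, and part (3) by observing that $\psi-\cK\sigma$ is purely distributional, so any $\nedc$ preimage has vanishing piecewise symmetric gradient and hence lies in $\ned$. Your extra care in justifying $\Def_h v=\sigma$ via the disjointness of $\reg$ and $\bm\Phi$ just makes explicit what the paper's one-line "purely distribution term" remark leaves implicit.
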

\begin{proof}
The first part comes from $\inc \cK \sigma = \inc \Def \cL \sigma - \inc \sigma = - \inc \sigma.$ Obviously, (1) implies (2).

Now we prove (3). Since $\sigma + \cK \sigma \in \Def \nedc$, it holds that 
$$\sigma + \psi \in \Def \nedc \Longleftrightarrow \psi - \cK \sigma \in \deff \nedc.$$
 Suppose that $\Def v = \psi - \cK \sigma$. Note that $\psi - \cK \sigma$ is a purely distribution term. This implies that the piecewise symmetric gradient of $v$ is zero. Therefore, it holds that $v \in \ned.$ This can conclude the result. 
\end{proof}

Now we are ready to prove the remaining part of \Cref{prop:nedc}.
\begin{proof}[Proof of \Cref{prop:nedc}: Cohomology part]
It follows from a case-by-case argument. By checking the piecewise deformation, it holds that $\ker(\Def) \cap \ned = \ker(\Def) \cap \nedc$. It suffices to show that the mapping $\psi \mapsto (0, \psi)$, with inverse $(\sigma, \psi) \mapsto \psi - \mathcal K \sigma$, is an isomorphism between $ [{\ker(\inc) \cap \bm \Phi}]/{\Def \ned}$ and $[{\ker(\inc) \cap (\reg \oplus \bm \Phi)}]/{\Def \ned^c}$. It suffices to show that the two mappings are well-defined. The mapping $\psi \mapsto (0, \psi)$ is straightforward, and the inverse $(\sigma, \psi) \mapsto \psi - \mathcal K \sigma$ is ensured by \Cref{lem:K}.  
\end{proof}

\subsection{Final proof}
\label{sec:final}

We aim to introduce the normal continuity to the space $\nedc$, obtaining the desired Lagrange space $\lag$. The proof is similar to that of \Cref{sec:ned}, but the auxiliary complex considered in this subsection is more complicated. The proof of the cohomology of the auxiliary complex is postponed to \Cref{sec:proof-p1n}.

Note that the distribution in $\bm \Phi$ can be identified as elements in the free abelian group $\bigoplus_{f \in \mathsf F_0} \mathcal P_1^n(f)$, with $\mathcal P_1^n(f) :=  \mathcal P_1(f) n_f$, by the following mapping 
\begin{equation}
j^1(\varphi_f^1[p n_{f}]) = pn_{f}.
\end{equation}

Define 
$$\mathcal P_1^n(e) := \Span\{ \mathcal P_1(e) n_1^e, \mathcal P_1(e) n_2^e \} \text{  and  }\mathcal P_1^n(x) := \Span\{ \mathcal P_1(x) \otimes \mathbb V \},$$ respectively. 
{Note that $\mathcal P_1(x)$ actually is a real number.} As a result, we have $\dim \mathcal P_1^n(f) = 3$, $\dim \mathcal P_1^n(e) = 4$ and $\dim \mathcal P_1^n(x) = 3$. 

The goal is to make the following diagram commutes and obtain the cohomology of the Regge complex by diagram chasing argument like \Cref{sec:ned}.
\begin{equation}
    \label{cd:Nedc-lag}
\begin{tikzcd}
0 \ar[r] & \nedc \ar[r,"\Def"] \ar[d,"id"] & \reg \oplus \bm \Phi \ar[r,"\inc"] \ar[d,"j^1"] & \hat{\bm X}^2 \ar[d,"j^2"] \ar[r,"\div"] & \bm X^3 \ar[d, "j^3"] \ar[r] & 0\\
0 \ar[r] & \nedc \ar[r,"\widetilde{\partial}"] & \bigoplus\limits_{f \in \mathsf F_0} \mathcal P_1^n(f) \ar[r,"\widetilde{\partial}"] & \bigoplus\limits_{e \in \mathsf E_0} \mathcal P_1^n(e) \ar[r,"\widetilde{\partial}"] & \bigoplus\limits_{x \in \mathsf V_0} \mathcal P_1^n(x) \ar[r] & 0.
\end{tikzcd}
\end{equation}

By \Cref{prop:nedc}, the upper row is the complex \eqref{cplx:nedc}, whose cohomology is $\mathcal H_{dR}^{\bs}(\Omega) \otimes \mathcal{RM}.$
 The lower row is defined as follows: 
\begin{equation}
\label{cplx:p1n-aux}
\begin{tikzcd}
0 \ar[r] & \nedc \ar[r,"\widetilde{\partial}"] & \bigoplus\limits_{f \in \mathsf F_0} \mathcal P_1^n(f) \ar[r,"\widetilde{\partial}"] & \bigoplus\limits_{e \in \mathsf E_0} \mathcal P_1^n(e) \ar[r,"\widetilde{\partial}"] & \bigoplus\limits_{x \in \mathsf V_0} \mathcal P_1^n(x) \ar[r] & 0.
\end{tikzcd}
\end{equation}
We define 
$$\widetilde{\partial} u = \sum_{K \in \mathsf K}\sum_{f \in \mathsf F_0} \mathcal O(f, K) [f \otimes u|_K] .$$ 
$$\widetilde{\partial} (f \otimes p_f ) = \sum_{e \subset f} \mathcal O(e,f) [e \otimes (p_f)|_e ],\text{  and  }\widetilde{\partial} (e \otimes p_e ) = \sum_{ x \in  e} \mathcal O(x,e) [x\otimes (p_e)|_x].$$
It follows that $[\mathcal{P}_1^n(f)]|_e \subset \mathcal{P}^n_1(e)$ whenever $e \subset f$, which indicates that \eqref{cplx:p1n-aux}, i.e., the lower row of \eqref{cd:Nedc-lag} is a complex. 

The following lemma shows the cohomology of this complex. We postpone the proof to \Cref{sec:proof-p1n}. 
\begin{lemma}
\label{lem:hom-p1n}
The sequence \eqref{cplx:p1n-aux} is a complex. Its cohomology is 
    \begin{equation}
        \begin{tikzcd}
        0 \ar[r] & \lag \ar[r] & 0 \ar[r] & 0 \ar[r] & 0 \ar[r] & 0.
        \end{tikzcd}
    \end{equation}
\end{lemma}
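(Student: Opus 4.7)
The plan is to follow the blueprint of \Cref{lem:hom-rm-aux}: decompose \eqref{cplx:p1n-aux} as a direct sum of subcomplexes indexed by vertices $v\in \mathsf V$ of $\Delta$, and compute each using the contractibility of the star $\st(v)$. The cohomology at position 0 is straightforward: if $u\in \nedc$ satisfies $\widetilde{\partial}u = 0$, then $\jump{u\cdot n_f}_f = 0$ on every interior face; combined with the tangential continuity built into $\nedc$ this gives $u\in \lag$, and the reverse inclusion is clear. The remaining work is to establish exactness at positions 1, 2, and 3.

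Let $\phi_v$ denote the $\mathcal{P}_1$ Lagrange basis function at $v$. The identity $p = \sum_{w\in \sigma} p(w)\phi_w|_\sigma$ on any simplex $\sigma$ immediately produces vertex-indexed direct-sum decompositions of $\mathcal{P}_1^n(f)$, $\mathcal{P}_1^n(e)$, and $\mathcal{P}_1^n(x)$. For $\nedc$ itself, write $u|_K = \sum_{v\in K}\phi_v|_K\, a_v^K$; because $\{\phi_v|_f\}_{v\in f}$ is a basis of $\mathcal{P}_1(f)$, the tangential-continuity constraint across an interior face $f$ shared by $K, K'$ decouples into independent constraints $\Pi_{t_f}(a_v^K) = \Pi_{t_f}(a_v^{K'})$ separately for each $v\in f$. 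This yields $\nedc = \bigoplus_{v\in \mathsf V} C_v^{-1}$, where $C_v^{-1}$ collects the $\phi_v A_v$ shape functions satisfying these constraints. Since $\phi_v|_\sigma|_\tau = \phi_v|_\tau$ for every inclusion $\tau\subset \sigma$, the differential $\widetilde{\partial}$ preserves the vertex grading, and \eqref{cplx:p1n-aux} splits as $\bigoplus_v C_v^\bullet$.

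For each fixed $v$, the subcomplex takes the form
$$0 \to C_v^{-1} \to \bigoplus_{\substack{f\in \mathsf F_0\\ f\ni v}} \mathbb{R}\phi_v|_f n_f \to \bigoplus_{\substack{e\in \mathsf E_0\\ e\ni v}}\phi_v|_e\, t_e^\perp \to M_v \to 0,$$
where $M_v = \mathbb{V}$ for $v\in \mathsf V_0$ and $M_v = 0$ otherwise, and the differentials are simplicial coboundaries on the star $\st(v)$ twisted by the normal-space data: the coefficient at a simplex $\sigma\ni v$ is its normal space in $\mathbb{R}^3$. Since $\st(v)$ is a topological (half-)ball and the complex is the ``relative'' cochain complex keeping only simplices that contain $v$, the cohomology is concentrated at the top degree (position $-1$), where it equals $\phi_v\mathbb{V}\cong \mathbb{V}$ (the constant $A_v$ mode). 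Summing over $v\in \mathsf V$ gives $\bigoplus_v \phi_v\mathbb{V} = \lag$ at position $-1$ and $0$ elsewhere.

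The principal difficulty lies in the rigorous local acyclicity of $C_v^\bullet$. Although the combinatorial shape resembles simplicial cochains on $\st(v)$ with normal-bundle coefficients, the coefficient space at each simplex changes in a nontrivial way, so one cannot invoke a Poincar\'e/cone lemma directly. The cleanest approach is to build an explicit contracting homotopy --- for instance, by ordering the cells of $\st(v)$ and trivializing cocycles inductively --- with separate but parallel treatment for interior vertices $v\in \mathsf V_0$ (where $\st(v)$ is a full ball and $M_v = \mathbb{V}$) and boundary vertices $v\notin \mathsf V_0$ (where $\st(v)$ is a half-ball meeting $\partial\Omega$ and $M_v = 0$).
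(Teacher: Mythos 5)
Your overall architecture --- identify the kernel at position $0$ with $\lag$ via the normal-jump condition, then split the rest of the sequence into vertex-indexed subcomplexes using the Lagrange basis functions $\lambda_v$ and reduce to a local computation on each star $\st(v)$ --- is exactly the strategy of the paper's proof, and your position-$0$ argument and the vertex-grading of $\bigoplus_f\mathcal P_1^n(f)$, $\bigoplus_e\mathcal P_1^n(e)$, $\bigoplus_x\mathcal P_1^n(x)$ are correct. The decomposition $\nedc=\bigoplus_v C_v^{-1}$ is also a legitimate observation (the tangential-continuity constraint does decouple vertex by vertex since $\{\phi_v|_f\}_{v\in f}$ is a basis of $\mathcal P_1(f)$), and folding $\nedc$ into the vertex grading is a mild variant of the paper, which instead first truncates to the complex \eqref{cplx:p1n-aux-2} (with $0$ in place of $\nedc$) and only afterwards shows that the residual index-$1$ cohomology $\widetilde{\hess}\big(\mathrm{Lag}(\st(v))\big)$ is precisely the image of $\widetilde\partial$ on $\nedc$, via $w=\sum_v\nabla w_v\,\lambda_v$.

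The genuine gap is the step you yourself flag as the ``principal difficulty'': the acyclicity of the local star complex with normal-space coefficients is asserted, not proved. This is the actual content of the lemma. As you note, the coefficient module changes from simplex to simplex ($\mathbb R n_f$ on faces, the two-dimensional normal space on edges, all of $\mathbb V$ at vertices), so contractibility of $\st(v)$ alone does not give a cone/Poincar\'e argument, and ``ordering the cells and trivializing cocycles inductively'' is not a proof --- it is not even clear such an induction closes, since a cocycle supported near the last cell must be matched against normal spaces of faces that are not free to rotate. The paper discharges exactly this step by recognizing the local complex \eqref{cplx:p0n-st} as the geometric form \eqref{cd:geometric} of the distributional Hessian complex of \cite{hu2023distributional}, whose cohomology on a topologically trivial patch is known (\Cref{lem:hom-p0n}): it vanishes except for $\mathcal P_1$ at index $0$, which is what produces the residual $\widetilde{\hess}\big(\mathrm{Lag}(\st(v))\big)$ at index $1$ of the truncated complex and hence, after quotienting by $\widetilde\partial\,\nedc$, the claimed exactness. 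Without either importing that result or constructing an explicit contracting homotopy for the normal-coefficient complex (neither of which you do), the argument is incomplete precisely at its load-bearing point. A secondary, fixable omission: in your version the local index-$1$ exactness requires showing that $\widetilde\partial(C_v^{-1})$ fills the kernel at the face level, which is the same $\widetilde{\hess}$ identification in disguise; and the boundary-vertex case needs the relative (half-ball) version of the local acyclicity, which you mention but do not supply.
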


The vertical mappings of \eqref{cd:Nedc-lag} are specified as follows. Set 
\begin{itemize}
\item[-] $j^1(\reg) = 0$, and $j^1(\varphi_f^1[pn_f]) = f \otimes (pn_f) $ for $p \in \mathcal P_1(f)$.
\item[-] $j^2(\varphi_e^2[p]) = e \otimes p|_e $. It follows from $\varphi_e^2[p] \in \hat{\bm X}^2$ if and only if $p\cdot t_e = 0$ that $j^2$ maps $\hat{\bm X}^2$ to $\bigoplus_{e\in\mathsf E_0} \mathcal P_1^n(e) $.
\item[-] $j^3(\varphi_x^3[p]) = p(x).$  
\end{itemize}
It is easy to see that each $j^k$ is surjective. 

\begin{proposition}
    Under the above setup, the diagram \eqref{cd:Nedc-lag} commutes. Moreover,
the kernel of the vertical maps in \eqref{cd:Nedc-lag} is
\begin{equation}
\begin{tikzcd}
0 \ar[r] & 0 \ar[r] & \reg \ar[r] & (\reg_0)'\ar[r] & (\lag_0)' \ar[r] & 0,
\end{tikzcd}
\end{equation}
the tail of the Regge complex \eqref{cplx:regge}.
\end{proposition}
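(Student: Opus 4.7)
The plan is to break the statement into two independent verifications: commutativity of the three squares in \eqref{cd:Nedc-lag}, and identification of the kernels of the three vertical maps $j^1, j^2, j^3$ (the first column is the identity and has trivial kernel). Once both are established, the kernel subcomplex automatically inherits its differentials from the top row, and comparing its spaces with the definitions of $\reg$, $(\reg_0)'$, and $(\lag_0)'$ yields the tail of \eqref{cplx:regge} as asserted.

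For commutativity I would go square by square, using the distributional identities already in hand. In the leftmost square, \Cref{lem:def-nedc} provides the decomposition $\Def w = \Def_h w + \sum_f \varphi_f^1[\jump{w\cdot n_f}_f \, n_f]$ for $w \in \nedc$; applying $j^1$ kills the Regge piece and returns $\sum_f f \otimes \jump{w\cdot n_f}_f n_f$, while tangential continuity of $\nedc$ collapses $\jump{w}_f$ in $\widetilde\partial w$ to the same expression. The middle square splits according to the two summands of $\reg \oplus \bm\Phi$. On $\reg$ one uses $\inc \reg \subset (\reg_0)'$ together with \eqref{eq:delta_tt}, which shows that each $\delta_e[t_e t_e^T]$ equals $\varphi_e^2[p_e]$ with $p_e$ vanishing on $e$; thus $j^2 \inc \sigma = 0 = \widetilde\partial j^1 \sigma$. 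On $\bm\Phi$, \Cref{lem:varphi} gives $\inc \varphi_f^1[p n_f] = \sum_e \mathcal O(e,f)\, \varphi_e^2[p n_f]$, and $j^2$ of this directly matches $\widetilde\partial(f \otimes p n_f)$; the inclusion $(p n_f)|_e \in \mathcal P_1^n(e)$ is ensured by $n_f \perp t_e$ for $e \subset f$. The rightmost square is the second half of \Cref{lem:varphi} combined with the evaluation $j^3 \varphi_x^3[p] = p(x)$ and the endpoint action of $\widetilde\partial$.

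For the kernel identifications, three of the four columns are straightforward bookkeeping. The identity on $\nedc$ has trivial kernel. The map $j^1$ is zero on $\reg$ and injective on $\bm\Phi$, since $\varphi_f^1[p n_f] \mapsto f \otimes p n_f$ is one-to-one in $p \in \mathcal P_1(f)$; hence $\ker j^1 = \reg$. For $j^3$, writing $p = a + b \times x$ with $p(x) = 0$ forces $a = -b \times x$, and substitution into \eqref{eq:varphi3} yields $\varphi_x^3[p](v) = \tfrac12(\curl v \cdot p)(x) + \tfrac12(v\cdot \curl p)(x) = (v \cdot b)(x) = \delta_x[b](v)$; as $b$ ranges over $\mathbb V$ and $x$ over $\mathsf V_0$ this exhausts $(\lag_0)'$.

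The main obstacle is the third column, showing $\ker j^2 = (\reg_0)'$. The key step is to parametrize $p = a + b \times x \in \RM$ under the two constraints $p \cdot t_e = 0$ (membership in $\hat{\bm X}^2$) \emph{and} $p|_e = 0$ (membership in $\ker j^2$). Linearity of $p$ along the edge forces $b$ to be parallel to $t_e$, say $b = \alpha t_e$, and then $p(x_0) = 0$ determines $a = -\alpha\, t_e \times x_0$ for any basepoint $x_0 \in e$, so $p = \alpha\, t_e \times (x - x_0)$. Substituting this into the definition \eqref{eq:varphi2} and mirroring the computation that leads to \eqref{eq:delta_tt} yields $\varphi_e^2[p] = -\alpha\, \delta_e[t_e t_e^T]$, so that $\ker j^2 = \Span\{\delta_e[t_e t_e^T] : e \in \mathsf E_0\} = (\reg_0)'$, completing the identification of the kernel complex with the tail of \eqref{cplx:regge}.
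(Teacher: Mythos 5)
Your proposal is correct and follows essentially the same route as the paper: commutativity is checked square by square via \eqref{eq:def}, \eqref{eq:delta_tt} and \Cref{lem:varphi}, and the kernels are identified by parametrizing $p\in\RM$ subject to the vanishing constraints on $f$, $e$, or $x$. Your computations are, if anything, slightly more explicit than the paper's (e.g.\ the parametrization $p=\alpha\,t_e\times(x-x_0)$ for $\ker j^2$ and the evaluation of $\varphi_x^3[p]$ for $\ker j^3$, where your constant is the correct one), and the conclusions agree.
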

\begin{proof}
    We first prove the commuting property.
\begin{itemize}
    \item[-]
The identity $\widetilde{\partial} = j^1(\Def)$ comes from \eqref{eq:def}. 
\item[-] Since $\inc(\reg)$ only gives the term with form $\delta_{e}[t_e t_e^T]$, which is in the kernel of $j^2$. To prove $j^2 \circ \inc = \widetilde{\partial} \circ j^1$, it suffices to consider the term in $\bm \Phi$. For $\varphi^1_f[pn_f] \in \bm \Phi$. We have 
$
\inc \varphi^1_f[p n_f] = \sum_{e \subset f} \mathcal O(e,f) \varphi^2_e[p n_f].
$
As a result, 
$$j^2 (\inc \varphi^1_f[p n_f]) = \sum_{e \subset f} \mathcal O(e,f) [e \otimes (pn_f)|_e ] = \widetilde{\partial} j^1 (\inc \varphi_f[p n_f])$$
\item[-] Similarly, we can prove $\widetilde{\partial} j^2 = j^3 \div.$
\end{itemize}
Clearly, the kernel of $j^1$ is $\reg$. The kernel of $j^2$ is spanned by those $\varphi_e^2[p]$ such that $p|_e = 0$. This implies that $p = t \times x - t \times x_0$ for some $x_0 \in e$. For such $p$, it follows from \eqref{eq:delta_tt} that 
$
\varphi_e^2[p] = -2\delta_e[{t_e t_e^T}].
$
Thus, $\ker(j^2) = (\reg_0)'$.

Similarly, $\ker(j^3)$ is spanned by those $\varphi_x^3[p]$ with $p(y) = 0$ for all $y \in \mathsf V_0$. Such $p$ has the form $p = b \times (x - y)$ at vertex $y$. Substituting this form into \eqref{eq:varphi3}, it follows that $\varphi_x^3[p] = 2\delta_x[b]$. Therefore, $\ker(j^3) = (\lag_0)'$.
\end{proof}

\begin{proof}[Proof of \Cref{thm:main}]
The proof follows a similar argument of \Cref{prop:ned}. Denote by $\mathscr R$ the Regge complex. Note that the sequence of cochain complexes:
\begin{equation}
\begin{tikzcd}[column sep=small]
0 \ar[r] & 0 \ar[r, "\Def"] \ar[d] & \reg \ar[r,"\inc"] \ar[d] & (\reg_0)' \ar[r,"\div"] \ar[d]& (\lag_0)' \ar[r] \ar[d] & 0 \\
0 \ar[r] & \nedc \ar[r,"\Def"] \ar[d,"id"] & \reg \oplus \bm \Phi \ar[r,"\inc"] \ar[d,"j^1"] & \hat{\bm X}^2 \ar[d,"j^2"] \ar[r,"\div"] & \bm X^3 \ar[d] \ar[r] & 0\\
0 \ar[r] & \nedc \ar[r,"\widetilde{\partial}"] & \bigoplus\limits_{f \in \mathsf F_0} \mathcal P_1^n(f) \ar[r,"\widetilde{\partial}"] & \bigoplus\limits_{e \in \mathsf E_0} \mathcal P_1^n(e) \ar[r,"\widetilde{\partial}"] & \bigoplus\limits_{x \in \mathsf V_0} \mathcal P_1^n(x) \ar[r] & 0.
\end{tikzcd}
\end{equation}
is exact, which leads to the following long exact sequence:
\begin{equation}
    \begin{tikzcd}[column sep=tiny]
      0 \arrow{r} & 0 \arrow{r} \arrow{d} &\ker(\inc) \cap \reg \arrow{r}\arrow{d} & \mathcal H^2(\mathscr R) \arrow{r} \arrow{d} & \mathcal H^3(\mathscr R) \arrow{r} \arrow{d} & 0 \\
   0 \arrow{r} &  \mathcal H^0_{dR}(\Omega)\otimes \RM \arrow[r, ""{coordinate, name=Z}] \arrow{d} &  \mathcal  H^1_{dR}(\Omega)\otimes \mathcal{RM{ }} \arrow[r, ""{coordinate, name=Y}] \arrow{d} &  \mathcal H^2_{dR}(\Omega) \otimes \mathcal{RM} \arrow[r, ""{coordinate, name=X}] \arrow{d}&  \mathcal H^3_{dR}(\Omega) \otimes \mathcal{RM} \ar[r] \ar[d]&  0\\
    0 \arrow{r}&\lag \arrow{r} \arrow[uur, rounded corners, dashed, to path={ -- ([yshift=-4ex]\tikztostart.south)
    -| (Z) [near end]\tikztonodes
    |- ([yshift=4ex]\tikztotarget.north)
    -- (\tikztotarget)}]
    & 0 \arrow{r} \arrow[uur, rounded corners, dashed, to path={ -- ([yshift=-4ex]\tikztostart.south)
    -| (Y) [near end]\tikztonodes
    |- ([yshift=4ex]\tikztotarget.north)
    -- (\tikztotarget)}] &0  \arrow{r} \arrow[uur, rounded corners, dashed, to path={ -- ([yshift=-4ex]\tikztostart.south)
    -| (X) [near end]\tikztonodes
    |- ([yshift=4ex]\tikztotarget.north)
    -- (\tikztotarget)}] & 0 \ar[r] & 0.
     \end{tikzcd}
    \end{equation}
    Similar to the proof of \Cref{prop:ned}, we can conclude that the cohomology of $\mathscr R$ is isomorphic to $\cH^{\bs}_{dR}(\Omega)\otimes\RM$.
\end{proof}

\subsection{Proof of \Cref{lem:hom-p1n}}
\label{sec:proof-p1n}
To complete the proof, we now prove \Cref{lem:hom-p1n}. We start by recalling some results from the distribution Hessian complex in \cite{hu2023distributional}. 
The complex \eqref{cplx:p1n-aux} resembles the construction of the distributional Hessian complex in \cite{hu2023distributional}.

Recall the distribution Hessian complex in three dimensions. It has the following form:
\begin{equation}
\label{cplx:hessian}
\begin{tikzcd}
0 \ar[r] & V^0 \ar[r,"\hess"] & V^1 \ar[r,"\curl"] & V^2 \ar[r,"\div"] & V^3 \ar[r] & 0. 
\end{tikzcd}
\end{equation}
Here $v^0 = \mathrm{Lag}$, the scalar Lagrange $\mathcal P_1$ function, 
$$V^1 = \Span\{\delta_f[n_fn_f^T]: f \in \mathsf F_0\},$$
$$V^2 = \Span\{ \delta_e[n_{1}^et_{e}^T],  \delta_e[n_{2}^et_{e}^T] : e \in \mathsf E_0 \},
\text{  and  }V^3 = \Span \{ \delta_{x}[v]: x \in \mathsf V_0, v \in \mathbb V\}.$$
The following geometric interpretation, as an isomorphism of the Hessian complex and the sequence of $\mathcal P_0^n$'s, was observed in \cite{hu2023distributional}. 
\begin{equation}
\label{cd:geometric}
\begin{tikzcd}
0 \ar[r]  & V^1 \ar[r,"\curl"] \ar[d] & V^2 \ar[r,"\div"] \ar[d] & V^3 \ar[r] \ar[d] & 0 \\ 
0 \ar[r] & \bigoplus\limits_{f \in \mathsf F_0} \mathcal P_0^n(f) \ar[r,"\widetilde{\partial}"] & \bigoplus\limits_{e \in \mathsf E_0} \mathcal P_0^n(e) \ar[r,"\widetilde{\partial}"] & \bigoplus\limits_{x \in \mathsf V_0} \mathcal P_0^n(x) \ar[r] & 0.
\end{tikzcd}
\end{equation}
Here, the space $\mathcal P_0^n(f)$ is the vector space that normal to $f$, $\mathcal P_0^n(e)$ is the space that normal to $e$, and $\mathcal P_0^n(x) = \mathbb V$. 

By the above geometric interpretation and the cohomology of distribution Hessian complex, we can derive the following result.
\begin{lemma}
    \label{lem:hom-p0n}
The sequence 
    \begin{equation}
        \label{eq:p0n}
        \begin{tikzcd}
            0 \ar[r] & \mathrm{Lag} \ar[r,"\widetilde{\hess}"] & \bigoplus\limits_{f \in \mathsf F_0} \mathcal{P}_0^n(f) \ar[r,"\widetilde{\partial}"] & \bigoplus\limits_{e \in \mathsf E_0} \mathcal{P}_0^n(e) \ar[r,"\widetilde{\partial}"] & \bigoplus\limits_{x \in \mathsf V_0} \mathcal{P}_0^n(x) \ar[r] & 0
            \end{tikzcd}
        \end{equation} is a complex, and the cohomology is isomorphic to $  \cH^{\bs}_{dR}(\Omega)\otimes\cP_1$. Here $\widetilde{\hess}$ maps $u \in \mathrm{Lag}$ to $\sum_{f \in \mathsf F_0} f \otimes \jump{\frac{\partial u}{\partial n_f}}.$
        
        As a consequence, when $\Omega$ is homologically trivial, the cohomology of \eqref{eq:p0n} vanishes but leaves $\mathcal P_1$ at index 0. 
\end{lemma}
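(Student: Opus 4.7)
The plan is to reduce Lemma~\ref{lem:hom-p0n} to the cohomology of the distributional Hessian complex \eqref{cplx:hessian} by enlarging the geometric isomorphism \eqref{cd:geometric} to include the degree-zero Lagrange slot. Concretely, I would consider the diagram
\begin{equation*}
\begin{tikzcd}
0 \ar[r] & V^0 \ar[r,"\hess"] \ar[d,"\mathrm{id}"] & V^1 \ar[r,"\curl"] \ar[d,"\cong"] & V^2 \ar[r,"\div"] \ar[d,"\cong"] & V^3 \ar[r] \ar[d,"\cong"] & 0 \\
0 \ar[r] & \mathrm{Lag} \ar[r,"\widetilde{\hess}"] & \bigoplus\limits_{f \in \mathsf F_0} \mathcal P_0^n(f) \ar[r,"\widetilde{\partial}"] & \bigoplus\limits_{e \in \mathsf E_0} \mathcal P_0^n(e) \ar[r,"\widetilde{\partial}"] & \bigoplus\limits_{x \in \mathsf V_0} \mathcal P_0^n(x) \ar[r] & 0,
\end{tikzcd}
\end{equation*}
where the three right-hand vertical arrows are the isomorphisms supplied by \eqref{cd:geometric} and the new one sends $u \in \mathrm{Lag}$ to itself in $V^0 = \mathrm{Lag}$.

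The key piece to verify is the commutativity of the leftmost square: that the distributional Hessian of $u \in \mathrm{Lag}$ corresponds, under the identification $V^1 \cong \bigoplus_{f \in \mathsf F_0} \mathcal P_0^n(f)$, to $\widetilde{\hess} u = \sum_f f \otimes \jump{\partial u/\partial n_f}\, n_f$. This reduces to a distributional integration by parts: since $u$ is $C^0$ and piecewise linear, the tangential part of $\grad u$ is continuous across each face $f$ and only the normal component jumps, so integration by parts against a smooth symmetric test matrix $\varphi$ yields
\begin{equation*}
\langle \hess u, \varphi\rangle = \sum_{f \in \mathsf F_0} \jump{\partial u/\partial n_f}\, \int_f n_f \cdot \varphi \cdot n_f
= \Big\langle \sum_{f \in \mathsf F_0} \jump{\partial u/\partial n_f}\, \delta_f[n_f n_f^T], \varphi \Big\rangle.
\end{equation*}
Under the generator map $\delta_f[n_f n_f^T] \leftrightarrow f\otimes n_f$ of \eqref{cd:geometric}, this is exactly $\widetilde{\hess} u$, which confirms commutativity and also shows that \eqref{eq:p0n} is a complex (since $\curl\circ\hess = 0$ in \eqref{cplx:hessian}).

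With the diagram commuting and all vertical arrows bijective, the two rows are isomorphic as cochain complexes, so their cohomologies coincide at every index. The cohomology of the distributional Hessian complex \eqref{cplx:hessian} was computed in \cite{hu2023distributional} to be $\mathcal H^{\bs}_{dR}(\Omega) \otimes \mathcal P_1$; transferring this gives the lemma. The consequence for $\Omega$ homologically trivial is then immediate: $\mathcal H^0_{dR}(\Omega)\otimes \mathcal P_1 = \mathcal P_1$ and all higher cohomologies vanish, matching the kernel of $\hess$ on a connected domain being the affine functions.

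The main obstacle is the careful distributional computation identifying $\hess u$ with the correct jump formula: one must track signs and orientations so the generators of $V^1$ defined in \cite{hu2023distributional} match the convention $f \otimes n_f$ of $\mathcal P_0^n(f)$ in \eqref{cd:geometric}, and verify that the tangential derivatives of $u$ contribute no Dirac mass (by $C^0$ continuity, their jumps vanish). Everything else is a direct invocation of the earlier geometric isomorphism and the already-established cohomology of the distributional Hessian complex.
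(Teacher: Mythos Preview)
Your proposal is correct and follows exactly the approach the paper intends: the paper derives Lemma~\ref{lem:hom-p0n} directly from the geometric isomorphism \eqref{cd:geometric} together with the cohomology of the distributional Hessian complex from \cite{hu2023distributional}, and you have simply written out the details (extending \eqref{cd:geometric} by the identity on $\mathrm{Lag}=V^0$ and checking the leftmost square via the jump formula for $\hess u$). There is nothing substantively different between your argument and the paper's.
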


Now we can give the proof of \Cref{lem:hom-p1n}, based on the above connection and the geometric decomposition of $\cP_1$.

\begin{proof}[Proof of \Cref{lem:hom-p1n}]

Clearly, the kernel at $\nedc$ is $\lag$. It remains to prove that the other cohomology groups are trivial. 

We first consider a modification of \eqref{cplx:p1n-aux}:
\begin{equation}
    \label{cplx:p1n-aux-2}
    \begin{tikzcd}
    0 \ar[r] & 0 \ar[r,"\widetilde{\partial}"] & \bigoplus_{f \in \mathsf F_0} \mathcal{P}_1^n(f) \ar[r,"\widetilde{\partial}"] & \bigoplus_{e \in \mathsf E_0} \mathcal{P}_1^n(e) \ar[r,"\widetilde{\partial}"] & \bigoplus_{x \in \mathsf V_0} \mathcal{P}_1^n(x) \ar[r] & 0.
    \end{tikzcd}
    \end{equation}

For each $v$, we define $\st(v)$ to be the local patch of $v$. It follows from the geometric decomposition of the Lagrange space that the complex \eqref{cplx:p1n-aux-2} can be identified as a direct sum of 
\begin{equation}
    \label{cplx:p0n-st}
    \begin{tikzcd}[column sep=small]
        0 \ar[r] & 0 \ar[r,"\widetilde{\partial}"] & \bigoplus\limits_{f \in \mathsf F_0(\st(v))} \mathcal{P}_0^n(f) \ar[r,"\widetilde{\partial}"] & \bigoplus\limits_{e \in \mathsf E_0(\st(v))} \mathcal{P}_0^n(e) \ar[r,"\widetilde{\partial}"] & \bigoplus\limits_{x \in \mathsf V_0(\st(v))} \mathcal{P}_0^n(x) \ar[r] & 0,
        \end{tikzcd}
    \end{equation}
for all vertices $x \in \mathsf V$.

 For each vertex $x$, denote by $\lambda_x$ the barycentric coordinate function with respect to $x$. Given a face $f$ and vertex $x \in f$, the term $\lambda_x f \otimes n_f$ is mapped to $\sum_{e: x \in e \subset f} \mathcal O(e,f) \lambda_x  e \otimes  n_f $ by $\widetilde{\partial}.$ Similar equations can be derived for a given edge $e$ and $x \in e$. Since $\lambda_x$ forms a basis of $\cP_1(f)$ ($\cP_1(e), \cP_1(x)$, resp.), the complex can be decomposed to a direct sum of \eqref{cplx:p0n-st}.

Since $\st(v)$ is topologically trivial, it then follows from \eqref{lem:hom-p0n} that the cohomology of \eqref{cplx:p1n-aux-2} is a direct sum of 
\begin{equation}
\label{cplx:hom-p0n-st}
\begin{tikzcd}
0 \ar[r] & 0 \ar[r] & \widetilde{\hess} \Big(\textrm{Lag}(\st(v))\Big)\ar[r] & 0 \ar[r] & 0 \ar[r] & 0.
\end{tikzcd}
\end{equation}
Now we consider the original complex \eqref{cplx:p1n-aux}. For each $v \in \mathsf V$, consider $w_v \in \textrm{Lag}(\st(v))$. Set $w = \sum_{v \in \mathsf V} \nabla w_v \lambda_v \in \nedc$. Therefore, $\widetilde{\partial} {\nabla w} =  \sum_{f \in \mathsf F_0} f \otimes \jump{\nabla w}  = \sum_{v\in\mathsf V} \lambda_v \widetilde{\hess} w_v$. Combining with \eqref{cplx:hom-p0n-st}, it follows that \eqref{cplx:p1n-aux} is exact at index 1. The cohomology at index 0 is straightforward. Therefore, we conclude the result. 

\end{proof}

\section*{Acknowledgement}

The work of KH was supported by a Royal Society University Research Fellowship (URF$\backslash$R1$\backslash$221398).

The authors would like to thank Douglas N. Arnold for helpful discussions.

\bibliographystyle{siam}      
\bibliography{ref}{}   

\begin{thebibliography}{10}

\bibitem{arnold2018finite}
{\sc D.~N. Arnold}, {\em Finite element exterior calculus}, SIAM, 2018.

\bibitem{arnold2006finite}
{\sc D.~N. Arnold, R.~S. Falk, and R.~Winther}, {\em Finite element exterior
  calculus, homological techniques, and applications}, Acta numerica, 15
  (2006), pp.~1--155.

\bibitem{arnold2021complexes}
{\sc D.~N. Arnold and K.~Hu}, {\em Complexes from complexes}, Foundations of
  Computational Mathematics, 21 (2021), pp.~1739--1774.

\bibitem{berchenko2022finite}
{\sc Y.~Berchenko-Kogan and E.~S. Gawlik}, {\em {Finite element approximation
  of the Levi-Civita connection and its curvature in two dimensions}},
  Foundations of Computational Mathematics,  (2022), pp.~1--51.

\bibitem{berwick2021discrete}
{\sc D.~Berwick-Evans, A.~N. Hirani, and M.~D. Schubel}, {\em {Discrete Vector
  Bundles with Connection and the Bianchi Identity}}, arXiv preprint
  arXiv:2104.10277,  (2021).

\bibitem{braess2008equilibrated}
{\sc D.~Braess and J.~Sch{\"o}berl}, {\em Equilibrated residual error estimator
  for edge elements}, Mathematics of Computation, 77 (2008), pp.~651--672.

\bibitem{vcap2022bgg}
{\sc A.~{\v{C}}ap and K.~Hu}, {\em {BGG sequences with weak regularity and
  applications}}, Foundations of Computational Mathematics,  (2023), pp.~1--40.

\bibitem{vcap2023bounded}
\leavevmode\vrule height 2pt depth -1.6pt width 23pt, {\em {Bounded Poincar\'e
  operators for twisted and BGG complexes}}, accepted, Journal de
  Math\'ematiques Pures et Appliqu\'ees (arXiv preprint arXiv:2304.07185),
  (2023).

\bibitem{vcap2001bernstein}
{\sc A.~{\v{C}}ap, J.~Slov{\'a}k, and V.~Sou{\v{c}}ek}, {\em
  {Bernstein-Gelfand-Gelfand sequences}}, Annals of Mathematics,  (2001),
  pp.~97--113.

\bibitem{christiansen2010finite}
{\sc S.~H. Christiansen}, {\em Finite element systems of differential forms},
  arXiv preprint arXiv:1006.4779,  (2010).

\bibitem{christiansen2011linearization}
{\sc S.~H. Christiansen}, {\em {On the linearization of Regge calculus}},
  Numerische Mathematik, 119 (2011), pp.~613--640.

\bibitem{christiansen2013exact}
{\sc S.~H. Christiansen}, {\em Exact formulas for the approximation of
  connections and curvature}, arXiv preprint arXiv:1307.3376,  (2013).

\bibitem{christiansen2023finite}
{\sc S.~H. Christiansen and K.~Hu}, {\em Finite element systems for vector
  bundles: elasticity and curvature}, Foundations of Computational Mathematics,
  23 (2023), pp.~545--596.

\bibitem{christiansen2011topics}
{\sc S.~H. Christiansen, H.~Z. Munthe-Kaas, and B.~Owren}, {\em Topics in
  structure-preserving discretization}, Acta Numerica, 20 (2011), pp.~1--119.

\bibitem{ciarlet2013linear}
{\sc P.~G. Ciarlet}, {\em Linear and nonlinear functional analysis with
  applications}, vol.~130, SIAM, 2013.

\bibitem{gawlik2020high}
{\sc E.~S. Gawlik}, {\em {High-order approximation of Gaussian curvature with
  Regge finite elements}}, SIAM Journal on Numerical Analysis, 58 (2020),
  pp.~1801--1821.

\bibitem{gawlik2023finite-any}
{\sc E.~S. Gawlik and M.~Neunteufel}, {\em Finite element approximation of
  scalar curvature in arbitrary dimension}, arXiv preprint arXiv:2301.02159,
  (2023).

\bibitem{gawlik2023finite}
\leavevmode\vrule height 2pt depth -1.6pt width 23pt, {\em {Finite element
  approximation of the Einstein tensor}}, arXiv preprint arXiv:2310.18802,
  (2023).

\bibitem{gopalakrishnan2020mass}
{\sc J.~Gopalakrishnan, P.~L. Lederer, and J.~Sch{\"o}berl}, {\em {A mass
  conserving mixed stress formulation for the Stokes equations}}, IMA Journal
  of Numerical Analysis, 40 (2020), pp.~1838--1874.

\bibitem{gopalakrishnan2022analysis}
{\sc J.~Gopalakrishnan, M.~Neunteufel, J.~Sch{\"o}berl, and M.~Wardetzky}, {\em
  {Analysis of curvature approximations via covariant curl and incompatibility
  for Regge metrics}}, arXiv preprint arXiv:2206.09343,  (2022).

\bibitem{gopalakrishnan2023analysis0}
\leavevmode\vrule height 2pt depth -1.6pt width 23pt, {\em {Analysis of
  curvature approximations via covariant curl and incompatibility for Regge
  metrics}}, The SMAI Journal of computational mathematics, 9 (2023),
  pp.~151--195.

\bibitem{gopalakrishnan2023analysis}
\leavevmode\vrule height 2pt depth -1.6pt width 23pt, {\em Analysis of
  distributional riemann curvature tensor in any dimension}, arXiv preprint
  arXiv:2311.01603,  (2023).

\bibitem{gunther1958statik}
{\sc W.~G{\"u}nther}, {\em Zur statik und kinematik des cosseratschen
  kontinuums}, Abh. Braunschweig. Wiss. Ges, 10 (1958), p.~1.

\bibitem{hehl2007elie}
{\sc F.~W. Hehl and Y.~N. Obukhov}, {\em {Elie Cartan's torsion in geometry and
  in field theory, an essay}}, arXiv preprint arXiv:0711.1535,  (2007).

\bibitem{holm2005regge}
{\sc C.~Holm and J.~D. Hennig}, {\em Regge calculus with torsion}, in Group
  Theoretical Methods in Physics: Proceedings of the XVIII International
  Colloquium Held at Moscow, USSR, 4--9 June 1990, Springer, 2005,
  pp.~556--560.

\bibitem{hu2023nonlinear}
{\sc K.~Hu}, {\em Nonlinear elasticity complex and a finite element diagram
  chase}, arXiv preprint arXiv:2302.02442,  (2023).

\bibitem{hu2023distributional}
{\sc K.~Hu, T.~Lin, and Q.~Zhang}, {\em {Distributional Hessian and divdiv
  complexes on triangulation and cohomology}}, arXiv preprint arXiv:2311.15482,
   (2023).

\bibitem{kroner1985incompatibility}
{\sc E.~Kr{\"o}ner}, {\em Incompatibility, defects, and stress functions in the
  mechanics of generalized continua}, International journal of solids and
  structures, 21 (1985), pp.~747--756.

\bibitem{li2018regge}
{\sc L.~Li}, {\em Regge finite elements with applications in solid mechanics
  and relativity}, PhD thesis, University of Minnesota, 2018.

\bibitem{licht2017complexes}
{\sc M.~W. Licht}, {\em Complexes of discrete distributional differential forms
  and their homology theory}, Foundations of Computational Mathematics, 17
  (2017), pp.~1085--1122.

\bibitem{lim2020hodge}
{\sc L.-H. Lim}, {\em {Hodge Laplacians on graphs}}, SIAM Review, 62 (2020),
  pp.~685--715.

\bibitem{nedelec1980mixed}
{\sc J.-C. N{\'e}d{\'e}lec}, {\em {Mixed finite elements in $\mathbb{R}^{3}$}},
  Numerische Mathematik, 35 (1980), pp.~315--341.

\bibitem{neunteufel2021avoiding}
{\sc M.~Neunteufel and J.~Sch{\"o}berl}, {\em Avoiding membrane locking with
  regge interpolation}, Computer Methods in Applied Mechanics and Engineering,
  373 (2021), p.~113524.

\bibitem{pechstein2011tangential}
{\sc A.~Pechstein and J.~Sch{\"o}berl}, {\em Tangential-displacement and
  normal-normal-stress continuous mixed finite elements for elasticity},
  Mathematical Models and Methods in Applied Sciences, 21 (2011),
  pp.~1761--1782.

\bibitem{pereira2002regge}
{\sc J.~Pereira and T.~Vargas}, {\em Regge calculus in teleparallel gravity},
  Classical and Quantum Gravity, 19 (2002), p.~4807.

\bibitem{regge1961general}
{\sc T.~Regge}, {\em General relativity without coordinates}, Il Nuovo Cimento
  (1955-1965), 19 (1961), pp.~558--571.

\bibitem{schmidt2001torsion}
{\sc J.~Schmidt and C.~Kohler}, {\em {Torsion degrees of freedom in the Regge
  calculus as dislocations on the simplicial lattice}}, General Relativity and
  Gravitation, 33 (2001), pp.~1799--1807.

\bibitem{schoberl2014c++}
{\sc J.~Sch{\"o}berl}, {\em {C++ 11 implementation of finite elements in
  NGSolve}}, Institute for analysis and scientific computing, Vienna University
  of Technology, 30 (2014).

\bibitem{scholz2019cartan}
{\sc E.~Scholz}, {\em {E. Cartan’s attempt at bridge-building between
  Einstein and the Cosserats--or how translational curvature became to be known
  as torsion}}, The European Physical Journal H, 44 (2019), pp.~47--75.

\bibitem{sharpe2000differential}
{\sc R.~W. Sharpe}, {\em {Differential geometry: Cartan's generalization of
  Klein's Erlangen program}}, vol.~166, Springer Science \& Business Media,
  2000.

\bibitem{yavari2012riemann}
{\sc A.~Yavari and A.~Goriely}, {\em {Riemann--Cartan geometry of nonlinear
  dislocation mechanics}}, Archive for Rational Mechanics and Analysis, 205
  (2012), pp.~59--118.

\bibitem{yavari2013riemann}
\leavevmode\vrule height 2pt depth -1.6pt width 23pt, {\em {Riemann--Cartan
  geometry of nonlinear disclination mechanics}}, Mathematics and Mechanics of
  Solids, 18 (2013), pp.~91--102.

\end{thebibliography}

\end{document}